\documentclass{amsart}
\usepackage{graphicx,amsfonts,amsmath,amssymb,amsthm,mathrsfs,url,hyperref} 
\usepackage[style=alphabetic]{biblatex} 
\usepackage[margin=1in]{geometry}
\usepackage[colorinlistoftodos]{todonotes}
\bibliography{refs}
\usepackage[all]{xy}

\theoremstyle{plain}
\newtheorem{thm}{Theorem}
\newtheorem{lemma}[thm]{Lemma}

\theoremstyle{definition}
\newtheorem{defn}[thm]{Definition}

\theoremstyle{remark}

\newtheorem*{ex}{Example}

\newtheorem*{notn}{Notation}

\numberwithin{equation}{section}
\numberwithin{thm}{section}

\DeclareMathOperator{\cl}{cl}

\DeclareMathOperator{\mex}{mex}

\title{Finite Cardinalities of Mis\`ere quotients}
\author{Simon Rubinstein-Salzedo}
\address{Euler Circle, Mountain View, CA 94040}
\email{simon@eulercircle.com}
\author{Stephen Zhou}
\email{stephenzh99@gmail.com}
\date{\today}

\begin{document}
\begin{abstract}
    We find that partisan mis\`ere quotients can have any finite cardinality other than $3$, answering a question of Allen \cite{allen2015peeking}. This contrasts with impartial mis\`ere quotients, which must have even cardinality.
\end{abstract}
\maketitle

\section{Introduction}
Games played under the mis\`ere play convention are much harder to understand than those under the normal play convention. For example in normal play, Sprague \cite{sprague} and Grundy \cite{grundy} independently discovered that any impartial game is equivalent to a nimber.
But impartial games under mis\`ere play are much more complex. Conway attempted to generalize Sprague--Grundy theory to mi\`ere play impartial games with his genus theory in Chapter 12 of \cite{conway2000numbers}, but this only gave solutions to a few rulesets. Progress on mis\`ere games stalled until Plambeck and Siegel introduced the mis\`ere quotient in \cite{plambeck2008misere}. 

Mis\`ere quotients are commutative monoids built from sets of games closed under taking options and sums. These sets are referred to as \emph{closed}. The set of games corresponding to a ruleset is usually closed. If $\mathscr{A}$ is a closed set, we say that $G \equiv_{\mathscr{A}} H$ if $o^-(G+X) =o^- (H+X)$ for all $X \in \mathscr{A}$. This is a relaxation of the usual definition of mis\`ere play equality, where $G = H$ if $o^- (G+X) = o^- (H+X)$ for all games $X $.
The mis\`ere quotient $\mathcal{Q}(\mathscr{A})$ consists of the equivalence classes of $\mathscr{A}$ under the $\equiv_{\mathscr{A}}$ relation.
It is often possible to solve rulesets whose finite mis\`ere quotients are finite, as done by Plambeck and Siegel for several mis\`ere play octal games in \cite{plambeck2008misere}. 

It was also shown in \cite{plambeck2008misere} that any mis\`ere quotient of a set of impartial games had even cardinality. This isn't true for mis\`ere quotients of partizan games: several examples of mis\`ere quotients with odd cardinality are given by Allen in \cite{allen2015peeking}. This motivates Allen to ask about the possible cardinalities of partizan mis\`ere quotients. Weimerskirch asks the same question in \cite{weimerskirchquotients}. 

In this paper, we find exactly which cardinalities are possible for finite partisan mis\`ere quotients. 
\begin{thm}
    \label{main}
    Let $n \in \mathbb{N}$, with $n \neq 3$. Then there exists a closed set $\mathscr{A}$ such that $|\mathcal{Q}(\mathscr{A})| = n$.
\end{thm}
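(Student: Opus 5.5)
The plan is an explicit construction: for each admissible $n$, exhibit a closed set and compute its quotient directly. The cheapest closed sets are the closures $\cl(G)$ of a single game $G$, meaning all sums of followers of $G$. When every follower of $G$ is itself a nonnegative multiple of a simpler game, or a sum of a bounded number of such multiples, the elements of $\cl(G)$ are indexed by tuples of natural numbers. Then $\equiv_{\cl(G)}$ can be decided by computing the mis\`ere outcome as a function of these indices. The small cases are immediate.
\begin{itemize}
\item For $n=1$, take $\mathscr{A}=\{0\}$.
\item For $n=2$, take $\cl(*)$, where $0\not\equiv *$ since $o^-(0)\neq o^-(*)$, and $*+*\equiv 0$.
\item More generally, every even $n$ can be realised by an impartial closed set, since impartial quotients are known to have even cardinality. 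I would nonetheless prefer a uniform partizan construction that also covers the odd sizes.
\end{itemize}

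The main step is to find, for each $m$, a partizan game $G_m$ such that the quotient of $\cl(G_m)$ is the ``truncated counting'' monoid $\{0,1,\dots,m-1\}$ with addition capped at $m-1$. This monoid has exactly $m$ elements, so it has odd size when $m$ is odd. The naive candidate $G=1=\{0\mid\}$ fails. In $\cl(1)=\{k\cdot 1\}$ one checks that $0$ is an $\mathcal N$-position while every $k\ge 1$ is an $\mathcal R$-position, because Right has no move and wins immediately. Hence the quotient has only the two classes $\{0\}$ and $\{k\ge 1\}$. The threshold therefore appears too early, and I would modify $G$ so that the outcome of $kG$ stabilises only at $k=m-1$. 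My first attempt is a game in which Right also has moves, but only finitely many, for example Right options leading into a Left-only tail. This kind of shape makes a sum $kG$ a race in which Right's supply of moves grows more slowly than Left's. The outcome of $kG$ would then flip as $k$ crosses $m-1$ and stay constant afterwards.

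To verify $|\mathcal Q(\cl(G_m))|=m$, there are two things to check.
\begin{itemize}
\item Distinctness: for $i<j\le m-1$, the test game $X=(m-1-j)G_m$ separates $iG_m$ from $jG_m$, because $iG_m+X$ lies below the threshold and $jG_m+X$ lies at it.
\item Collapse: $kG_m\equiv (m-1)G_m$ for all $k\ge m-1$. This holds because every $X$ in the closed set is a sum of followers, and adding such an $X$ to a sum already at the threshold leaves the outcome unchanged.
\end{itemize}
If the followers of $G_m$ are not themselves multiples of $G_m$, the elements of $\cl(G_m)$ are indexed by tuples rather than single integers. In that case I would choose $G_m$ so that every follower is equivalent to a multiple of $G_m$ or to $0$, keeping the quotient of size $m$. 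If one family cannot realise every $m$, a fallback is to combine it with $*$ to get sizes like $2m$, or with a second counting game, and then compute the resulting quotient.

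The main obstacle is designing $G_m$ so that the mis\`ere outcome of every element of $\cl(G_m)$ has a clean closed form. That closed form must produce exactly one new threshold, with no hidden extra classes coming from followers. The step I expect to be genuinely fiddly is the outcome computation for general sums, done by induction on the indices, including both the Left-first and Right-first cases. The exclusion $n\ne 3$ suggests that the natural families hit every size except $3$. I would therefore first test small candidate $G$ by hand, looking for families whose quotients have sizes $4,5,6,\dots$, before committing to a general proof.
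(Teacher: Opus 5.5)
\textbf{There is a genuine gap: apart from $n=1,2$, your proposal contains no construction.} Everything rests on a game $G_m$ whose closure has the truncated counting monoid $\{0,\dots,m-1\}$ as its quotient, and you never exhibit one. The properties you list under \emph{Distinctness} and \emph{Collapse} are exactly what has to be proved; they do not follow from a \emph{race} shape. They are: a threshold multiple whose outcome differs from all smaller multiples, a class whose outcome cannot be changed by adding any element of the closed set, and no extra classes coming from the followers of $G_m$.

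Worse, the family cannot exist as you state it. For $m=3$ it would produce a closed set with $|\mathcal{Q}|=3$, contradicting Theorem~\ref{main2}. So any design of this kind must fail somewhere, and your plan gives no account of where or why.

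Your remark on even $n$ is also wrong. That impartial quotients have even order is a necessary condition, not a realisability result, and in fact no impartial closed set has a quotient of order $4$. To see this, take a game of minimal birthday outside the classes of $0$ and $*$; all its options lie in those two classes. Since $\{0\}=*$ and $\{*\}=*+*\equiv 0$, it must have options in both classes. Then its sums with copies of $*$ have the same outcomes as the corresponding sums of $*2$ and $*$, which forces the six classes of $\mathcal{Q}(\mathscr{T}_2)$. So $n=4$ really needs a partizan example, such as the paper's $\{0,*\mid 0\}$.

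The paper avoids searching for a single counting game by using positions whose outcomes it can compute exactly. For blue mutant flowers, Theorem~\ref{atomic_weight} shows that every sum containing at least two flowers, plus any nimbers, lies in $\mathcal{L}^-$. This gives the absorbing class for free. Suppose the Right options of the flower $B_i$ are the nimbers $*x$ with $x\in H_i$, for a subspace $H_i\le N_{<2^n}$ containing $1$. Then Lemma~\ref{eval} shows that $B_i+N\in\mathcal{N}^-$ exactly when $\mathcal{G}(N)\in H_i$. So the one-flower sums split into the $2^{n-d_i}$ cosets of $H_i$, while the nimber sums keep the $2^n+2$ classes of $\mathscr{T}_n$. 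This gives quotients of size $2^n+\sum_i 2^{n-d_i}+3$, which can be raised by one using $\{2B_1\mid\varnothing\}$. The binary expansion of $N-3$, together with the supply of distinct subspaces containing $1$ in each dimension, then reaches every $N$ except $1,2,3,4,5,6,7,11$. The remaining cases other than $3$ are settled by explicit small examples. To turn your plan into a proof you would need an equally concrete family with a provable absorbing class, and an arithmetic mechanism that reaches every $n\neq 3$.
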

\begin{thm}
   \label{main2}
    There is no $\mathscr{A}$ such that $|\mathcal{Q}(\mathscr{A})| = 3$.
\end{thm}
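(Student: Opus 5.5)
The plan is to argue by contradiction: suppose $|\mathcal{Q}(\mathscr{A})| = 3$, and combine the monoid structure of $\mathcal{Q}(\mathscr{A})$ with the outcome function to get a contradiction. Two basic facts come first. Taking $X = 0$ in the definition of $\equiv_{\mathscr{A}}$ shows that equivalent games have the same mis\`ere outcome, so $o^-$ descends to a well-defined map on $\mathcal{Q}(\mathscr{A})$. The class of $0$ is the identity of the monoid and has outcome $\mathcal{N}$, since whoever is to move in $0$ cannot move and therefore wins.

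Next I would pin down which small games must lie in $\mathscr{A}$. Since the quotient has more than one element, $\mathscr{A}$ contains a nonzero game. Take one, $G$, of minimal birthday. By closure under options, every option of $G$ lies in $\mathscr{A}$ and has smaller birthday, so every option is $0$. Hence $G$ is one of $* = \{0\mid 0\}$, $1 = \{0\mid\}$, $-1=\{\mid 0\}$. Their mis\`ere outcomes are $\mathcal{P}$, $\mathcal{R}$ and $\mathcal{L}$ respectively. In particular the class $[G]$ differs from $[0]$, so the three classes are $[0]$, $[G]$ and exactly one further class $c$.

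I would then split into cases according to which of $*, \pm 1$ lie in $\mathscr{A}$.
\begin{itemize}
\item If two of them lie in $\mathscr{A}$ (for example $1$ and $-1$, or $*$ and $1$), their outcomes differ from each other and from $\mathcal{N}$. This already uses up all three classes. I would then exhibit a sum such as $1+(-1)$, $*+1$ or $*+*$ whose behaviour under adding a suitable $X$ from the same small generating set separates it from all three classes. For instance, $1+(-1)$ has outcome $\mathcal{P}$ in mis\`ere play, which is a fourth outcome.
\item If only $1$ (symmetrically $-1$) is present, every game built from it has a structure I would analyze directly. Any sum $H + k\cdot 1$ with $k \ge 1$ is heavily Right-favoured. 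The goal is to show that the third class $c$ would force $[1]+c$ to be a fourth class, or else force $c=[1]$.
\item The case where $*$ is present is the analogue of the Plambeck--Siegel parity argument. I would try to show that $x \mapsto x + [*]$ has no fixed points on $\mathcal{Q}(\mathscr{A})$ because adding $*$ flips the outcome between $\mathcal{N}$ and $\mathcal{P}$ on the relevant classes, and then use the order-$3$ monoid tables to derive a contradiction.
\end{itemize}

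The main obstacle is the last case, and more generally controlling the unknown third class $c$. The impartial proof that $x\neq x+*$ uses the fact that every position is impartial. Here $\mathscr{A}$ may contain partizan games of outcome $\mathcal{L}$ or $\mathcal{R}$, and for these, adding $*$ need not simply swap $\mathcal{N}$ and $\mathcal{P}$. My first attempt would be to enumerate the few commutative monoids of order $3$ with identity, together with their possible labellings by outcomes satisfying $o^-([0])=\mathcal{N}$ and $o^-([*])=\mathcal{P}$. For each candidate, I would then find a concrete game in $\mathscr{A}$ (built from $*$, $\pm 1$, and a minimal-birthday representative of $c$ together with its options) whose outcome or multiplication behaviour is inconsistent with that table.
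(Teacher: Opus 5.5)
Your outline has the same skeleton as the paper's proof: reduce to $\mathscr{A}$ containing exactly one of $*,1,\overline{1}$, then work with a minimal-birthday representative of the third class. However, one concrete claim is false, and the core cases are never argued.

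\textbf{The two-generator case.} $1+\overline{1}$ lies in $\mathcal{N}^-$, not $\mathcal{P}^-$. Left moving first plays $1\to 0$; Right is then forced to play $\overline{1}\to 0$, and Left, unable to move, wins. Symmetrically, Right wins moving first. So $1+\overline{1}$ has the outcome of $0$ and supplies no fourth class. These cases can be repaired directly.
\begin{itemize}
\item If $1,\overline{1}\in\mathscr{A}$: $1+1\in\mathcal{R}^-$ is separated from $1$ by $\overline{1}$, because $1+1+\overline{1}\in\mathcal{R}^-$ while $1+\overline{1}\in\mathcal{N}^-$.
\item If $*,1\in\mathscr{A}$: $*+*\in\mathcal{N}^-$ is separated from $0$ by $1$, because $*+*+1\in\mathcal{P}^-$ while $1\in\mathcal{R}^-$. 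The case $*,\overline{1}$ is symmetric.
\end{itemize}
The paper instead quotes Allen's computations $|\mathcal{Q}(1,\overline{1})|=\infty$ and $|\mathcal{Q}(1,*)|=7$.

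\textbf{The real gap.} The cases that carry the whole theorem, where $\mathscr{A}$ contains only $*$ or only $1$, appear in your proposal only as goals. The idea that makes them finite, which the paper exploits, is the following.
\begin{itemize}
\item A minimal-birthday representative $G$ of the third class has every option in one of the two known classes.
\item Replacing options by $\equiv_{\mathscr{A}}$-equivalent games does not change the outcomes of the relevant sums. So $G$ may be treated as a game whose Left and Right option sets are subsets of $\{0,*\}$ (resp.\ $\{0,1\}$).
\item One then splits on $o^-(G)$ and computes $o^-(G+*)$ (resp.\ $o^-(G+1)$) and $o^-(G+G)$ by hand.
\item Outcome is a class invariant and there are only three classes, so these sums can be placed in the quotient.
\item A contradiction follows, either from an outcome no class can carry, or by evaluating a sum such as $G+G+*+*$ in two ways.
\end{itemize}

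\textbf{Why your substitutes do not replace this computation.}
\begin{itemize}
\item A list of order-$3$ monoids with outcome labels gives no contradiction on its own. For example, $\{[0],[*],c\}$ with $[*]+[*]=[0]$, $c$ absorbing and $o^-(c)=\mathcal{L}^-$ is consistent with everything you know about $*$. It is eliminated only by computing the actual minimal $G$: its Right options must all be $0$, and then $o^-(G+*)=\mathcal{N}^-\neq o^-(c)$.
\item The parity statement $x+[*]\neq x$ is false for partizan quotients in general: class iii of Theorem \ref{quotientsize} is fixed by adding $*$. In your hypothetical quotient it could only come out of the same case analysis.
\item In the $1$ case, $\mathscr{A}$ need not consist of games built from $1$; it may contain, say, $\{0\mid 1\}$ or $\{\mid 1\}$. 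So \emph{heavily Right-favoured} is not a usable handle.
\end{itemize}
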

The rest of the paper will be spent proving these theorems. In Section \ref{prelims}, we recall some facts necessary for our construction of quotients of arbitrary cardinality, before giving it and proving its validity in Section \ref{construction}. We finish by proving that no quotient of cardinality three exists in Section \ref{impossibility}.
Throughout, all games will be assumed to be played under the mis\`ere play convention.
\section{Preliminaries}
\label{prelims}

First we recall the outcome classes of nim under mis\`ere play.
\begin{defn}
    Let $N = *a_1 + \cdots +  *a_n $ be a sum of nimbers. Then the \emph{Grundy value} of $N$, $\mathcal{G}(N)$ is $\mathcal{G}(N) = a_1 \oplus \cdots \oplus a_n $, where $\oplus$ is the XOR operation.
\end{defn}
Since $\oplus$ is commutative, if $N$ and $M$ are sums of nimbers, $\mathcal{G}(N+M) = \mathcal{G}(N) \oplus \mathcal{G}(M)$. It is known that if $m < \mathcal{G}(N)$, there exists an option $N'$ of $N$ such that $\mathcal{G}(N') = m$.
\begin{defn}
    Let $N = *a_1 \dots + *a_n$ be a sum of nimbers. If $\max (a_i) > 1$, we say that $N$ is \emph{firm}, otherwise we say that $N$ is \emph{fickle}.
\end{defn}

Notice that $N$ must be firm if $\mathcal{G}(N) > 1$.
\begin{thm}
    \label{nim_strategy}
    Let $N = *a_1 + \cdots +  *a_n $ be a sum of nimbers. Then \begin{equation*}
        N \in \begin{cases}
            \mathcal{N^-} & \text{ if } \mathcal{G}(N) \neq 0 \text{ and }N\text{ is firm} \\
             \mathcal{P^-} & \text{ if } \mathcal{G}(N) = 0 \text{ and }N\text{ is firm}  \\
              \mathcal{N^-} & \text{ if } \mathcal{G}(N)  = 0 \text{ and } N\text{ is fickle}\\
               \mathcal{P^-} & \text{ if } \mathcal{G}(N) \neq 0 \text{ and } N\text{ is fickle}\\
        \end{cases} 
        \end{equation*}
\end{thm}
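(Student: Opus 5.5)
We prove Theorem \ref{nim_strategy} by strong induction on the total size $a_1+\cdots+a_n$ of the position. Under mis\`ere play the player unable to move \emph{wins}. A position is therefore in $\mathcal{N^-}$ exactly when it is terminal, or when it has some option in $\mathcal{P^-}$. It is in $\mathcal{P^-}$ exactly when it is nonterminal and every option lies in $\mathcal{N^-}$.

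Summands $*0$ affect neither $\mathcal{G}$ nor firmness, so we drop them freely. The base case is the empty sum. It is fickle with $\mathcal{G}=0$, and the player to move cannot move and so wins, giving $\mathcal{N^-}$ as claimed. For the inductive step, call a position \emph{good} if it lies in the class the theorem predicts for $\mathcal{P^-}$: firm with $\mathcal{G}=0$, or fickle with $\mathcal{G}\neq 0$. By the inductive hypothesis, it suffices to prove two facts about every nonterminal $N$:
\begin{itemize}
\item[(a)] if $N$ is good, then every option of $N$ is not good;
\item[(b)] if $N$ is not good, then some option of $N$ is good.
\end{itemize}

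\emph{Fickle case.} Here $N$ is $k$ copies of $*1$, with $\mathcal{G}=k \bmod 2$. Every option removes one $*1$ and stays fickle, so it flips the parity. This gives (a) and (b) at once for $k\ge 1$.

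\emph{Firm case, (b).} Suppose $N$ is firm with $g=\mathcal{G}(N)\neq 0$. If at least two heaps exceed $1$, make the usual normal-play move to Grundy value $0$. That move changes only one heap, so at least one heap larger than $1$ survives, the option is firm, and it is good. If exactly one heap $*a$ with $a\ge2$ exists, let $k$ be the number of $*1$ heaps. Reduce $*a$ to $*0$ or $*1$, choosing whichever makes the total number of $*1$'s odd. The result is fickle with $\mathcal{G}=1\neq 0$, hence good.

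\emph{Firm case, (a).} Suppose $N$ is firm with $\mathcal{G}(N)=0$. Exactly one heap can exceed $1$ only if $\mathcal{G}\ge 2$, so at least two heaps exceed $1$. Any move changes one heap, so the option is still firm, and its Grundy value is nonzero because changing a single heap changes the XOR. So the option is not good.

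The main obstacle is the single-large-heap subcase of (b): the standard normal-play move may land in a fickle position with the wrong parity. Choosing between reducing the large heap to $*0$ or to $*1$ resolves this. The observation that a firm position with $\mathcal{G}=0$ has at least two heaps larger than $1$ is what makes (a) work.
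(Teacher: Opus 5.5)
Your proof is correct and complete. The paper gives no argument of its own for this theorem; it states it and refers to page 136 of Conway's \emph{On Numbers and Games}. So there is nothing internal to match, and your proposal supplies a self-contained proof.

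What you give is the standard Bouton-style induction. You identify the predicted $\mathcal{P}^-$-set (firm with $\mathcal{G}=0$, or fickle with $\mathcal{G}\neq 0$) and note that the terminal position lies outside it. You then check that no option of a good position is good, while every non-good nonterminal position has a good option. You also handle the two places where mis\`ere play departs from normal play:
\begin{itemize}
\item In (b), when exactly one heap exceeds $1$, the normal-play move may leave a fickle position of the wrong parity. Reducing that heap to $*0$ or $*1$ so as to leave an odd number of $*1$'s repairs this.
\item In (a), a firm position with $\mathcal{G}=0$ must contain at least two heaps larger than $1$. Hence every option remains firm and has nonzero Grundy value.
\end{itemize}

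The only outside fact you use is one the paper already recalls just before defining firmness: a nim sum has options realizing every smaller Grundy value. So your argument could stand in place of the citation. The reference's advantage is only brevity and placing the result within Conway's broader treatment of mis\`ere play.
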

  This is proven in  \cite[136]{conway2000numbers}.
The following Theorem is a special case of a Theorem proven in \cite{otherpaper}, which also has simpler proof.
\begin{notn}
    The notation $nG$ for integer $n$ denotes the disjunctive sum of $n$ copies of $G$.
\end{notn}
\begin{thm}
  \label{atomic_weight}
  Let $B = \{*a_1,\dots ,*a_n\mid *b_1,\dots,*b_m\}$ be a game where all moves are to nimbers. Assume that $\mex (a_i)  > \mex (b_i)>1$. Let $N$ be any sum of nimbers. Let $k>2$ be an integer. Then \[
    B + N \in \mathcal{N}^- \cup \mathcal{L}^-    
  \qquad\text{and}\qquad
      kB + N \in \mathcal{L}^-.
  \]
\end{thm}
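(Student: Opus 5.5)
Write $\alpha=\mex(a_i)$ and $\beta=\mex(b_j)$, so $\alpha>\beta\ge 2$. From $B$, Left can move to every $*m$ with $m<\alpha$, and Right can move to every $*m$ with $m<\beta$. In particular both players always have a move available in any copy of $B$. The plan is to use Theorem~\ref{nim_strategy} to control every position that has become a pure nim sum, and to induct on the birthday of $N$, and later on the number of copies of $B$.

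\emph{Single copy, Left first.} Let $g=\mathcal{G}(N)$; we show Left moving first wins $B+N$, by induction on $N$.
\begin{itemize}
\item If $g<\alpha$ and $N$ is firm, Left moves $B\to *g$. Right then faces a firm nim sum with Grundy value $0$, which is in $\mathcal{P}^-$ by Theorem~\ref{nim_strategy}, so Right (the next player) loses.
\item If $N$ is fickle, then $g\le 1$. Left moves $B\to *(g\oplus 1)$, which is legal since $g\oplus 1\le 1<\alpha$. This leaves a fickle sum with Grundy value $1$, which is in $\mathcal{P}^-$.
\item If $g\ge\alpha$, then $N$ is firm. Left plays in $N$ to some $N'$ with $\mathcal{G}(N')=\beta$; this is possible because $\beta<\alpha\le g$.
\end{itemize}
In the last case, Right faces $B+N'$ and has two kinds of reply.
\begin{itemize}
\item If Right moves $B\to *m$ with $m<\beta$, Left faces $N'+*m$. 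Its Grundy value is $\beta\oplus m\neq 0$, and it is firm because $\mathcal{G}(N')=\beta\ge 2$. By Theorem~\ref{nim_strategy} it is in $\mathcal{N}^-$, so Left wins.
\item If Right moves in $N'$ to $N''$, Left faces $B+N''$ with $N''$ simpler than $N$, and wins by induction.
\end{itemize}
Hence $B+N\in\mathcal{N}^-\cup\mathcal{L}^-$.

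\emph{Several copies.} We prove two claims by simultaneous induction on $j$ and on $N$:
\begin{itemize}
\item[(A)] Left moving first wins $jB+N$ for all $j\ge 1$;
\item[(B)] Left moving second wins $jB+N$ for all $j\ge 3$.
\end{itemize}
Together these give $kB+N\in\mathcal{L}^-$ for $k>2$.

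For (B), Right cannot be out of moves, because every copy of $B$ gives Right a move. Any Right move either replaces one $B$ by a nimber or moves in $N$. Either way Left then moves first in $j'B+N'$ with $j'\ge j-1\ge 2$ and $N'$ a nim sum, so (A) applies. For (A) with $j\ge 4$, Left makes any move in a copy of $B$, and Right then faces $(j-1)B+N'$ with $j-1\ge 3$, so (B) applies. The case $j=1$ is the single-copy argument above.

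The main obstacle is (A) for $j=2$ and $j=3$, because there Left's move leaves a position with too few copies of $B$ for (B). I would handle these by adapting the single-copy strategy. Left first makes the nim part good, by moving one $B$ to a suitable $*m$ or moving in $N$ to Grundy value $\beta$. The intended invariant is that in every position Right then faces, each Right move inside a copy of $B$ lands in an $\mathcal{N}^-$ configuration for Left, while each Right move in the nim part lands in a smaller instance. For $j=2$, I would first try the following line. Left moves in $N$ to Grundy value $\beta$ when $g\ge\alpha$, and otherwise converts one copy of $B$ so that the nim part becomes firm with Grundy value $\beta$. Then if Right ever converts a copy of $B$, Left is left facing $B+(\text{nim})$, which is the single-copy case already proved. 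This reduces the $j=2$ case to checking that the Grundy-value-$\beta$ target is always reachable. For $j=3$, Left's move leaves $2B+N'$ with Right to move; after any Right move Left faces $j'B+N''$ with $j'\in\{1,2\}$, which is covered by the single-copy case and the $j=2$ case.
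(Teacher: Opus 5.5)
Your single-copy argument is correct, with one small omission that is easy to fix. Right's options in $B$ are all the $*b_j$, which may include values $b_j>\beta$, not only the $*m$ with $m<\beta$. The same computation handles them, since $\beta\oplus b_j\neq 0$.

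\textbf{The gap: case (A) with $j=2$.} Everything else rests on this case. Case (A) for $j=3$ reduces to it. Case (B) for $j=3$ needs it whenever Right converts a copy of $B$. All larger $j$ cascade down to these. You leave $j=2$ as a plan whose remaining step is to check that the nim part can always be brought to Grundy value $\beta$, and that step is false.

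Take $B=\{0,*,*2\mid 0,*\}$, so $\alpha=3$ and $\beta=2$, and take $N=*$. Then $g=1<\alpha$, so your plan asks Left to move a copy of $B$ to $*(g\oplus\beta)=*3$, which is not a Left option. Left's actual moves from $2B+*$ leave nim parts of Grundy value $1,0,3$ (moving a copy of $B$ to $0,*,*2$) or $0$ (moving $*\to 0$). None equals $\beta$, and Left cannot move in $N$ up to value $\beta$ because $g<\beta$. So the proposed winning move does not exist, and the proof of $kB+N\in\mathcal{L}^-$ is incomplete.

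\textbf{The missing observation.} When $N\neq 0$, no Grundy target is needed. Left makes \emph{any} move in $N$, leaving $2B+N'$. Right always has a move, and every Right reply puts Left first in one of two positions:
\begin{itemize}
\item $B+(\text{nim sum})$, which Left wins by the single-copy case (you already noted this half);
\item $2B+N''$ with $N''$ simpler than $N$, which Left wins by induction on $N$.
\end{itemize}
Only when $N=0$ must Left play in a copy of $B$, and then the target is reachable. She moves to $B+*\beta$, which is legal since $\beta<\alpha$. If Right moves in $B$, he leaves the firm nim sum $*b_j+*\beta$, whose Grundy value is nonzero, so it is in $\mathcal{N}^-$ for Left. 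If Right moves in $*\beta$, he leaves $B+*m$ for Left to start.

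This is how the paper argues, and it gives $2B+N\in\mathcal{L}^-$ directly. The paper then treats every $k\geq 2$ uniformly: Left moves in $N$ if $N\neq 0$, and otherwise moves one copy of $B$ to $0$ (for $k>2$). Once the $j=2$ case is in place, your (A)/(B) bookkeeping goes through, but it is unnecessary.
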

\begin{proof}
First we will prove that $B+N \in \mathcal{N}^- \cup\mathcal{L}^-$.   We will induct on the birthday of $N$. If $N$ is $0$, then Left wins by moving to $* \in \mathcal{P}^-$. Assume the theorem has been proven for all $B+N$, where $N'$ is any subposition of $N$. Say that $\mathcal{G}(N) \leq \mex(b_i)$ and $G$ is firm. Then $\mathcal{G}(N) < \mex(a_i)$, meaning there exists an option $*a_i$ such that $\mathcal{G}(*a_i) = a_i = \mathcal{G}(N)$. By Theorem \ref{nim_strategy}, Left wins going first by moving to $*a_i + N$, since $\mathcal{G}(*a_i+N) = \mathcal{G}(a_i)\oplus\mathcal{G}(N) = \mathcal{G}(N)\oplus\mathcal{G}(N) = 0$.  Say that Say that $\mathcal{G}(N) \leq \mex(b_i)$ and $G$ is fickle. Then $\mathcal{G}(N) \in \{0,1\}$, so $\mathcal{G}(N) \oplus 1 \leq 1$ and Left wins by moving to $*(\mathcal{G}(N) \oplus 1)$, by Theorem \ref{nim_strategy}.
   
   So assume that $\mathcal{G}(N) > \mex(b_i)$. Then $N$ must be firm, and Left can win going first by moving in $N$ to a option $N'$ such that $\mathcal{G}(N') = \mex(b_i)$. By induction, $B+N'' \in \mathcal{N}^- \cup \mathcal{L}^-$ for all options $N''$, so Right cannot win by moving in $N'$. So Right must move in $B$ to some $*b_i$. But $\mathcal{G}(*b_i + N') = b_i \oplus \mex(b_i) \neq 0$, so Right loses, since $\mex(b_i) >1$, meaning $N'$ must have a nimber greater than $*$ in it.

   Now we prove that $kB + N \in \mathcal{L}^-$ for all $k>1$. Induct on $k$, and induct on the birthday of $N$ within that induction. The base case for both inductions is where $N$ is $0$ and $k = 2$. Left wins by moving to $B+*\mex(b_i)$, since Right must follow up by moving to $*b_j + *\mex(b_i)$, which is a firm sum of nimbers. The Grundy value of $*b_j + *\mex(b_i)$ is $b_j \oplus \mex(b_i) \neq 0$, so Right loses by Theorem \ref{nim_strategy}. Now assume that the Theorem has been proven for all subpositions of $kB +N$ by induction. Then Left wins $kB+N$ going first by moving to $N'$ if $N$ is not $0$, and to $(k-1)B$ is $N$ is $0$ and $k>2$. Right loses going first since he must move from $kB +N$ to $k'B + N'$, where $k '$ is positive, and induction proves that Left wins these positions going first. Thus $kB+N \in \mathcal{L}^-$ whenever $k > 1$.
\end{proof}

Using the notation of \cite{otherpaper}, we call games of form $B$ are \emph{blue mutant flowers}. Theorem \ref{atomic_weight}
 says that Left wins going first in the sum of one blue mutant flower with nimbers, and going first or second in the sum of more than one blue mutant flower with nimbers. The quotients we use will contain the closures of sets of blue mutant flowers. First, recall the mis\`ere quotients of the nimber $*2^{n-1}$, as described in \cite{plambeck2008misere}.

\begin{thm}
  \label{tame} 
    Let $\mathscr{T}_n = \cl (*2^{n-1})$. If $a$ is the equivalence class of $*$, and $b_i$ the equivalence class of $*2^i$, then the mis\`ere quotient $\mathcal{Q}(\mathscr{T}_n)$ is the monoid \begin{equation*}
        \langle a, b_1,b_2,\dots, b_{n-1} \mid a^2 = 1,b_i^3 = b_i,b_1^2 = b^2_2 = \dots = b_{n-1}^2 \rangle,
    \end{equation*}
which has cardinality $2^{n}+2$.
\end{thm}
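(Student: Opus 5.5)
The plan is to compute $\mathcal{Q}(\mathscr{T}_n)$ directly from Theorem \ref{nim_strategy}; I assume $n \ge 2$, so that the generators $b_i$ exist. First I describe $\mathscr{T}_n$. The options of $*2^{n-1}$ are $*0,\dots,*(2^{n-1}-1)$, so $\mathscr{T}_n$ consists exactly of all finite sums $*m_1+\cdots+*m_k$ with every $m_j \le 2^{n-1}$. The Grundy values of such sums range over all of $\{0,\dots,2^n-1\}$. Fickle sums occur only with Grundy value $0$ or $1$, while firm sums occur with every value in $\{0,\dots,2^n-1\}$; for instance, a firm sum of value $0$ or $1$ is $*2+*2$ or $*2+*2+*$.

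\textbf{Classes are determined by Grundy value and firmness.} Suppose $G,H \in \mathscr{T}_n$ have the same Grundy value and are both firm or both fickle. For any $X \in \mathscr{T}_n$ we have $\mathcal{G}(G+X) = \mathcal{G}(H+X)$. Moreover $G+X$ is firm if and only if $G$ or $X$ is firm, and likewise for $H+X$, so $G+X$ and $H+X$ are both firm or both fickle. Theorem \ref{nim_strategy} then gives $o^-(G+X) = o^-(H+X)$, so $G \equiv_{\mathscr{T}_n} H$.

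\textbf{Distinct invariants give distinct classes.} If $G,H$ are both firm with $\mathcal{G}(G) \ne \mathcal{G}(H)$, choose $X \in \mathscr{T}_n$ with $\mathcal{G}(X) = \mathcal{G}(G)$. Then $G+X$ is firm of value $0$, hence in $\mathcal{P}^-$, while $H+X$ is firm of nonzero value, hence in $\mathcal{N}^-$. If one of $G,H$ is fickle and the other firm with the same value, take $X=0$; Theorem \ref{nim_strategy} assigns them opposite outcomes. If both are fickle with values $0$ and $1$, again take $X = 0$. Two elements with different values and different firmness are separated by $X=*2+*2$, which makes both firm and reduces to the first case. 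This yields exactly $2 + 2^n$ classes.

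\textbf{Identifying the monoid.} Map $a \mapsto [*]$ and $b_i \mapsto [*2^i]$. The relations hold in $\mathcal{Q}(\mathscr{T}_n)$:
\begin{itemize}
\item $2* = 0$ is fickle of value $0$, so $a^2 = 1$;
\item $3(*2^i)$ is firm of value $2^i$, so $b_i^3 = b_i$;
\item every $2(*2^i)$ is firm of value $0$, so all the $b_i^2$ are equal.
\end{itemize}
The map is surjective. A firm class of value $g$ is $a^{g_0}\prod_{i \in S} b_i$, where $S$ is the set of higher bits of $g$ when $S \neq \emptyset$, and $a^{g_0} b_1^2$ otherwise. The fickle classes are $1$ and $a$.

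Finally, the abstract monoid has at most $2^n+2$ elements. Using $b_i^2 b_j = b_j^3 = b_j$, every word reduces to one of the following: $a^\varepsilon$; $a^\varepsilon b_1^2$; or $a^\varepsilon\prod_{i\in S} b_i$ with $S \ne \emptyset$. Counting gives $2 + 2 + 2(2^{n-1}-1) = 2^n+2$. A surjection from this monoid onto a set of exactly $2^n+2$ classes forces an isomorphism.

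The main obstacle is the careful bookkeeping in this last step: checking that the reduction to normal forms is complete, so that the presentation has no more than $2^n+2$ elements. I would also check the degenerate case $n=1$ separately; there the closure of $*$ has a quotient of size $2$, so the statement really needs $n \ge 2$.
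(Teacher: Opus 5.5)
Your proof is correct, and it differs from the paper in kind: the paper gives no argument for this statement and simply recalls it from Plambeck--Siegel \cite{plambeck2008misere}.

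You instead derive it directly from Theorem \ref{nim_strategy}:
\begin{itemize}
\item the class of an element of $\mathscr{T}_n$ is determined by its Grundy value and whether it is firm or fickle, giving $2^n$ firm classes and $2$ fickle ones;
\item the relations hold, and your explicit representatives give surjectivity;
\item a normal-form count shows the presented monoid has at most $2^n+2$ elements, so the surjection is an isomorphism.
\end{itemize}

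What your route buys is self-containedness. It also produces exactly the (Grundy value, firmness) description of the classes that the paper relies on later, in Lemma \ref{1} and in the proof of Theorem \ref{quotientsize}, where ``the $2^n+2$ classes of $\mathscr{T}_n$'' are invoked. The citation only buys brevity.

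Your observation that the statement needs $n\ge 2$ is also correct, and the paper does not note it. For $n=1$ the presentation collapses to $\langle a\mid a^2=1\rangle$ and $|\mathcal{Q}(\cl(*))|=2\neq 2^1+2$.

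Two small things to tighten:
\begin{itemize}
\item Your normal-form reduction uses commutativity. Say explicitly that the presentation is read as a commutative monoid, as it must be, since mis\`ere quotients are commutative.
\item In the case of different value and different firmness, the separating game is $*2+*2+X'$, with $X'$ chosen as in the both-firm case, rather than $*2+*2$ alone. For example, a firm sum of value $3$ and $*$ both land in $\mathcal{N}^-$ after adding only $*2+*2$.
\end{itemize}
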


\begin{lemma}
    \label{eval}
    Let $X = \{x_1 ,\dots,x_n\}$ be a set of nonnegative integers, and define $B = \{  0,* ,*2,\dots, *\mex (X) \mid *x_1,\dots,*x_n\}$. Let $N$ be a sum of nimbers. Then if $\mex(X) > 1$, \begin{equation}
        B +N \in \begin{cases}
            \mathcal{N}^- & \text{if } \mathcal{G}(N) \in X \\ 
            \mathcal{R}^- & \text{otherwise}\\
            
        \end{cases}
    \end{equation}
\end{lemma}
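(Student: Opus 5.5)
We argue by induction on the birthday of $N$, in the style of the proof of Theorem \ref{atomic_weight}. Every option of $B+N$ is either $*k+N$ or $*x_i+N$, which is a sum of nimbers, or $B+N'$ with $N'$ an option of $N$, which is covered by the induction hypothesis. So the outcome of $B+N$ is determined by Theorem \ref{nim_strategy} together with induction. For each of the two cases of the statement we check both who wins moving first as Left and who wins moving first as Right. Throughout, write $m=\mex(X)>1$ and $g=\mathcal{G}(N)$.

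\emph{Case $g\in X$.} Right moving first plays $B\to *g$, reaching $*g+N$ with Grundy value $0$. If this sum is firm it lies in $\mathcal{P}^-$ by Theorem \ref{nim_strategy}, so Right wins. If it is fickle, then $g\in\{0,1\}$ and $N$ is fickle. Here Right instead chooses between $*0$ and $*1$, both of which lie in $X$ because $m>1$. He picks the one giving a fickle sum with nonzero Grundy value, which lies in $\mathcal{P}^-$. Left moving first argues as in Theorem \ref{atomic_weight}. If $g\le m$, she moves $B\to *g$, and the fickle subcase is handled with $*0$ or $*1$ exactly as above. If $g>m$, then $N$ is firm and she moves in $N$ to $N'$ with $\mathcal{G}(N')=m\notin X$. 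Right then cannot answer in $B$ profitably, since every $*x_i+N'$ is firm with Grundy value $x_i\oplus m\neq 0$. Right also cannot answer in $N'$ profitably: by induction, the resulting positions $B+N''$ are Left-first wins whenever $\mathcal{G}(N'')\in X$, and the remaining ones are handled by the second case below. Hence $B+N\in\mathcal{N}^-$.

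\emph{Case $g\notin X$.} Right moving first should move in $N$ to an $N'$ with $\mathcal{G}(N')\in X$. Such an $N'$ exists when $g>m$, because $m$ itself is excluded from $X$ but every smaller value is reachable. When $g<m$ we have $g\in X$, so this situation cannot arise, and we are left with $g=m$. In that subcase Right uses his move to $*x_i$ with $x_i<m$ to reach a firm sum with nonzero Grundy value. It then remains to show that, by the first case and induction, Left to move loses each resulting position.

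For Left moving first we must rule out every option. The options $B+N'$ are handled by the first case, which shows they are Right-to-move wins when $\mathcal{G}(N')\in X$, and otherwise by induction. The options $*k+N$ with $k\le m$ are handled by Theorem \ref{nim_strategy}.

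\textbf{Main obstacle.} I expect the main obstacle to be the Left-first analysis in the second case, and in particular Left's move $B\to *k$ with $k=g\le m$. This move produces a sum of Grundy value $0$. I would first try to show, using the fickle/firm split and the exact convention for $\mathcal{R}^-$, that this option is not winning for Left. If that fails, the precise form of the Left options of $B$ (equivalently, which nimbers Left can reach) has to be rechecked against the outcome claimed in the statement.
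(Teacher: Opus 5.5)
There is a genuine gap, and it starts with the target itself. The ``$\mathcal{R}^-$'' in the displayed statement is a typo for $\mathcal{L}^-$. The paper's proof also misprints $\mathcal{N}^-\cup\mathcal{R}^-$ in its first line, but it concludes ``$B+N\in\mathcal{L}^-$'' when $N$ is firm and $\mathcal{G}(N)\notin X$.

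The obstacle you flag cannot be overcome. Write $m=\mex(X)$ and suppose $g=\mathcal{G}(N)\notin X$; then $g\ge m\ge 2$, so $N$ is firm.
\begin{itemize}
\item If $g=m$, Left's move $B\to *m$ gives a firm sum with Grundy value $0$. That sum is in $\mathcal{P}^-$, so the move wins for Left.
\item If $g>m$, Left wins by moving in $N$ to Grundy value $m$, exactly as in your first case.
\end{itemize}
More directly, the Left options of $B$ have mex $m+1>m>1$, so Theorem \ref{atomic_weight} already says Left always wins moving first. For instance, with $X=\{0,1\}$ and $N=*2$ one checks $B+*2\in\mathcal{L}^-$. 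Your first case is otherwise fine, but its Left-first step needs $B+N''$ to be a Left-first win whenever $\mathcal{G}(N'')\notin X$. That is the negation of what your second case sets out to prove, so the two cases are inconsistent as set up.

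Your Right-first analysis in the second case also reverses the mis\`ere outcome conventions.
\begin{itemize}
\item After Right moves to $B+N'$ with $\mathcal{G}(N')\in X$, Left is the next player. By your own first case $B+N'\in\mathcal{N}^-$, so Left wins.
\item A move to $*x_i+N$ gives a firm sum with nonzero Grundy value. That sum lies in $\mathcal{N}^-$, so again Left, moving next, wins.
\end{itemize}
In fact Right has no winning move there.

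The correct argument, which is the paper's, needs no induction. Theorem \ref{atomic_weight} gives that Left wins moving first. So Right, moving first, can only hope to win by moving in $B$, since any move in $N$ leaves a Left-first win. For firm $N$, the move to $*x_i+N$ wins if and only if $x_i=g$, that is, if and only if $g\in X$. For fickle $N$ we have $g\in\{0,1\}\subseteq X$, and Right wins by moving to $*(g\oplus 1)$. Hence $B+N\in\mathcal{N}^-$ if $g\in X$, and $B+N\in\mathcal{L}^-$ otherwise.
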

\begin{proof}
    By Theorem \ref{atomic_weight}, Left wins going first under mis\`ere play, so $B+N \in \mathcal{N}^-\cup \mathcal{R}^-$, and Right going first must move in $B$ to have a chance at winning. So to determine the outcome class out $B+N$, we only need to see if Right has a winning move in $B$.

Let $\mathcal{G}(N) = a$. If $N$ is firm, and $a \in X$, we see that $B+N \in \mathcal{N}^-$, since Right can win by moving to $*a +N$, which has Grundy value $a \oplus a = 0$.  If $N$ is firm and $a \not \in X$, $B+N \in \mathcal{L}^-$, since $\mathcal{G}(*x_i + N) = x_i \oplus a \neq 0 $ for all $i$. If $N$ is fickle, then $\mathcal{G}(N) \in X$ since $\mex(X)>1$, and Right wins by moving to $*(a \oplus 1) + N$.
\end{proof}

\section{Quotients of Sums of Blue Mutant Flowers}

\label{construction} 

We use the constructions of Theorems \ref{quotientsize} and \ref{quotientsize2} to prove Theorem \ref{main}.
Using Theorem \ref{atomic_weight}, we will prove that there exist partisan mis\`ere quotients of every finite cardinality except $3$. Then we will prove that no quotient of cardinality $3$ exists in Section \ref{impossibility}. 

\begin{defn}
    Let $N_{<2^n}$ be the set of integers strictly less than $2^n$ viewed as a vector space over $\mathbb{F}_2$, where the addition  operation is the binary XOR $\oplus$. This vector space is isomorphic to $\mathbb{F}_2^n$.
\end{defn}
\begin{thm}
    \label{quotientsize}
    Let $H_i = \{x_{1,i},\dots,x_{2^{d_i},i}\}, 1 \leq i \leq m$ be a subspace of  $N_{<2^n}$ of dimension $d_i$, with $1 \in H_i$, and let $B_i = \{0,*\mid 0,*x_{1,i},\ldots,*x_{2^{d_i},i}\}$. Let $\mathscr{A} = \cl( *2^{n-1},B_1,\ldots,B_m ) $. If all subspaces $H_i$ are distinct and contain $1$, then $\left |\mathcal{Q} (\mathscr{A})\right | = \left( 2^n +\sum_{i=1}^m 2^{n-d_i} \right) +3 $ with the following equivalence classes:

i) the $2^n+2$ equivalence classes of $\mathscr{T}_n$,

ii) The $2^{n-d_i}$ equivalence classes $B_i + X$, where $\mathcal{G}(X) \equiv a \pmod {H_i}$ for a fixed $a \in N_{<2^n}$.

iii) A single equivalence class containing all sums in $\mathscr{A}$ with at least two blue mutant flowers.
\end{thm}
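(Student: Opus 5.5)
The plan is to compute the outcome of every position in $\mathscr{A}$, sort the positions into the three families (i)--(iii), and then show that games in the same family member are $\equiv_{\mathscr{A}}$ while games in different ones are distinguished by some $X\in\mathscr{A}$. The count is then automatic:
\[
(2^n+2)+\sum_{i=1}^m 2^{n-d_i}+1 .
\]

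\emph{Step 1: describe $\mathscr{A}$.} Every option of $B_i$ is a nimber $*x$ with $x<2^n$. The closure of $*2^{n-1}$ contains every nimber sum whose Grundy value ranges over all of $N_{<2^n}$. Hence each element of $\mathscr{A}$ has the form $k_1B_1+\cdots+k_mB_m+N$ with $N$ a sum of nimbers.

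\emph{Step 2: compute outcomes.} There are three cases.
\begin{itemize}
\item If $\sum k_i=0$, Theorem \ref{nim_strategy} gives the outcome.
\item If $\sum k_i\ge 2$, I would apply Theorem \ref{atomic_weight} (with the players' roles adjusted to the orientation of $B_i$). The mixed sums of distinct flowers need the same inductive argument as its second half, rerun verbatim with several kinds of flower. This should give a single partizan outcome (the $\mathcal{L}^-$-type one) independent of $N$.
\item If $\sum k_i=1$, i.e.\ the position is $B_i+N$, I would redo the computation of Lemma \ref{eval}. The favoured player wins going first by Theorem \ref{atomic_weight}. The other player wins going first iff he has a move $*x$ in $B_i$ with $*x+N\in\mathcal{P}^-$. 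For firm $N$ this means $\mathcal{G}(N)\in H_i$. For fickle $N$ the condition becomes $\mathcal{G}(N)\oplus 1\in H_i$, and since $1\in H_i$ and $H_i$ is a subspace, this is again $\mathcal{G}(N)\in H_i$.
\end{itemize}
So $B_i+N$ lies in $\mathcal{N}^-$ or in the favoured player's class according as $\mathcal{G}(N)\in H_i$ or not. This is precisely where the hypothesis $1\in H_i$ is used. I expect this single-flower outcome computation, keeping track of orientation and the fickle case, to be the main obstacle.

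\emph{Step 3: equivalences.} Adding any $X\in\mathscr{A}$ to a game with at least two flowers keeps at least two flowers. Hence all such games have the same outcome for every $X$, giving class (iii).

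For $B_i+X$ and $B_i+Y$ with $\mathcal{G}(X)\equiv\mathcal{G}(Y)\pmod{H_i}$, consider adding $Z$.
\begin{itemize}
\item If $Z$ is a pure nimber sum, the outcome depends only on whether $\mathcal{G}(X)\oplus\mathcal{G}(Z)\in H_i$, which is a coset condition.
\item If $Z$ contains a flower, both sums fall into (iii).
\end{itemize}
Thus the games in each coset are equivalent, giving (ii). For pure nimber games, $\mathcal{Q}(\mathscr{T}_n)$ already identifies them under nimber tests. Adding a flower-containing $Z$ produces either a (iii) position or $B_j+(\text{nimbers})$, whose outcome again depends only on $\mathcal{G}$ modulo $H_j$. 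Since $\mathcal{G}$ is constant on each $\mathscr{T}_n$-class, the classes of Theorem \ref{tame} persist, giving (i).

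\emph{Step 4: distinctions.}
\begin{itemize}
\item The $\mathscr{T}_n$ classes are distinguished by nimber tests (Theorem \ref{tame}).
\item A nimber-only game has outcome in $\mathcal{N}^-\cup\mathcal{P}^-$ under every nimber test. A game containing a flower has a partizan outcome under a suitable nimber test: any test for (iii), and for $B_i+X$ a $Z$ with $\mathcal{G}(X\oplus Z)\notin H_i$.
\item Class (iii) versus $B_i+X$: choose $Z$ with $\mathcal{G}(X)\oplus\mathcal{G}(Z)\in H_i$, which makes $B_i+X+Z\in\mathcal{N}^-$.
\item Distinct cosets of the same $H_i$: test with $Z$ having $\mathcal{G}(Z)=\mathcal{G}(X)$.
\item $B_i+X$ versus $B_j+Y$ with $i\ne j$: the map $z\mapsto\mathcal{G}(X)\oplus z$ sends $H_i$ to a coset of $H_i$, while the $\mathcal{N}^-$-set for $B_j+Y$ is a coset of $H_j$. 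Cosets of distinct subspaces are distinct sets, so some $\mathcal{G}(Z)$ lies in exactly one of them, and that $Z$ separates the two games.
\end{itemize}

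Summing the class counts gives the stated cardinality.
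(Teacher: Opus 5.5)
Your proposal is correct and is essentially the paper's proof. It uses the same outcome computation (one flower: $\mathcal{N}^-$ iff $\mathcal{G}(N)\in H_i$, else $\mathcal{L}^-$; two or more flowers: always $\mathcal{L}^-$) and then the same identifications and separations as Lemmas \ref{4}, \ref{3}, \ref{5}, \ref{two_blue}, \ref{1}, \ref{2}. You are in fact more careful than the paper in two places: you notice that mixed sums $B_i+B_j+N$ are not covered by Theorem \ref{atomic_weight}, and you prove the ``only if'' half of Lemma \ref{2}.

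Two small corrections:
\begin{enumerate}
\item $B_i$ must be read in the orientation of Lemma \ref{eval}, $\{0,*,\dots,*\mex(H_i)\mid *x : x\in H_i\}$, because for the literal $B_i$ adjusting roles does not work. For $H_i=\{0,1\}$ the literal $B_i$ is $\{0,*\mid 0,*\}=*2$. When $2\in H_i$ it is Right-favoured, and Left's only moves are $0,*$, so its outcomes would not depend on $H_i$.
\item The base case $B_i+B_j$ is not verbatim. Left should move in the flower of larger $\mex$ to $*c$ with $c=\min(\mex(H_i),\mex(H_j))$. For $H_i=\{0,1,4,5\}$, $H_j=\{0,1,2,3\}$, moving either flower to $*$ of its own $\mex$ lets Right move the other flower to the same nimber. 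That reaches a firm sum of Grundy value $0$, which is in $\mathcal{P}^-$.
\end{enumerate}
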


\begin{thm}
\label{quotientsize2}
    Let $\mathscr{A}$ be as in Theorem \ref{quotientsize}. Then $\left|\mathcal{Q} ( \mathscr{A} \cup \{2B_1 | \varnothing\})\right|  = \left |\mathcal{Q} (\mathscr{A})\right |+1$.

    The equivalence classes are as follows: 

    i) from Theorem \ref{quotientsize}

    ii) from Theorem \ref{quotientsize}

    iii) The equivalence class iii) from Theorem \ref{quotientsize}, expanded to include all sums in $\cl(\mathscr{A} \cup \{2B_1 \mid \varnothing\})$ containing at least one copy of $\{2B_1 \mid \varnothing\}$, excluding $n\{2B_1 \mid \varnothing\}$.

    iv) One equivalence class containing all $n\{2B_1 \mid \varnothing\}$ for $n>0$.
\end{thm}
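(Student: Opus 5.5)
Let $C = \{2B_1 \mid \varnothing\}$ and $\mathscr{A}' = \cl(\mathscr{A}\cup\{C\})$. The plan is to show that the only new equivalence class is class iv). Every element of $\mathscr{A}'$ has the form $G + kC$, where $G \in \mathscr{A}$ and $k \ge 0$. The options of $C$ are only Left's move to $2B_1$, which already lies in $\mathscr{A}$ (class iii)). So closure under options adds no further games.

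The first step is to compute outcomes. Take a sum $X = G + kC$ with $k\ge 1$.
\begin{itemize}
\item Suppose $G$ contains at least one blue mutant flower, or $k \geq 2$. Moving $C \to 2B_1$ only adds flowers for Left, so the position behaves like the class iii) positions. I would reprove Theorem \ref{atomic_weight} in this generality, by the same induction on $k$ and on the birthday. Left never needs to move in a $C$ except as a last resort, and whenever she does, the result has at least two flowers and is in $\mathcal{L}^-$. Right has no moves in any $C$. So Right's moves are in $G$, and the inductive argument of Theorem \ref{atomic_weight} should go through, showing $X\in\mathcal{L}^-$ whenever the total count of flowers plus $C$'s is at least $2$.
\item The remaining cases are $G$ a sum of nimbers with $k\ge1$, and $G$ containing exactly one flower with $k=0$. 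For $G=N$ a nimber sum and $k=1$, Right's moves are only in $N$. Left can move $C\to 2B_1$, giving $2B_1+N\in\mathcal{L}^-$ by Theorem \ref{atomic_weight}, so Left wins moving first in that way. Right moving first must move in $N$, and when $N=0$ Right has no move and wins immediately under misère play. So $kC$ with $k>0$ is in $\mathcal{R}^-\cup\mathcal{N}^-$ (Right wins going first), while $kC+N$ with $N\ne 0$ is in $\mathcal{L}^-$, by induction on $N$.
\end{itemize}

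The second step is to show that the classes i)–iii) of Theorem \ref{quotientsize} remain distinct and unmerged in $\mathscr{A}'$. Two games of $\mathscr{A}$ that are equivalent modulo $\mathscr{A}$ must be tested additionally against $X+kC$. If both games lie in class i) or ii), adding $kC$ with $k\ge1$ moves them into the "at least two" regime, except for nimber sums, which are handled by the computation above. In either case the outcome depends only on the class data (for nimber sums, on whether $N=0$). Nimber sums that are distinct in $\mathscr{T}_n$ are already distinguished by nimber tests, so nothing merges.

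The third step is to place the new games. Games $G+kC$ with $k\ge 1$ and either $G\ne 0$ or $G$ containing a flower behave under every test exactly like class iii): adding any $Y$ still leaves at least two "flower-like" summands, except when $G$ is a nonzero nimber sum and $Y$ is a nimber sum. In that case the outcome is in $\mathcal{L}^-$ by the earlier computation, matching class iii), since two flowers plus nimbers is also in $\mathcal{L}^-$. Class iii) must be checked against $G=0$ and $Y=0$ separately. For $kC$ itself, take $Y=0$: $kC$ is won by Right going first, whereas any class iii) game is in $\mathcal{L}^-$, so $kC$ forms a new class. Finally, $kC \equiv k'C$ for $k,k' > 0$ because their outcomes coincide for every $Y$ by the computation above. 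This gives exactly one additional class.

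The main obstacle is the extended version of Theorem \ref{atomic_weight} with $C$'s present. In particular I must check that Right cannot exploit a position with exactly one flower $B_i$ plus $C$'s and nimbers, where Right goes first and moves $B_i\to 0$ or $B_i\to *$, leaving only $C$'s and nimbers. In that position Left either moves $C\to 2B_1$ or plays nim, and I would verify she always wins with whichever of these is appropriate.
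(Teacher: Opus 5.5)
Your second step fails, and it cannot be patched: the claimed count is off by one. You check that no two classes of $\mathcal{Q}(\mathscr{A})$ merge, but you never check that none splits, and the new test $C=\{2B_1\mid\varnothing\}$ does split one. By Theorem \ref{tame} ($a^2=1$) and Lemma \ref{1}, one of the classes in i) is the identity class $\{0,\ *+*,\ *+*+*+*,\dots\}$, so $0\equiv_{\mathscr{A}}*+*$. Now test both against $C$. First, $C\in\mathcal{N}^-$: Right has no move, and Left moves to $2B_1\in\mathcal{L}^-$. Second, $*+*+C\in\mathcal{L}^-$: Left moves to $2B_1+*+*$, and Right's only moves go to $*+C$, where Left answers $C\to 2B_1$ and reaches $2B_1+*\in\mathcal{L}^-$. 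Your parenthetical ``for nimber sums, on whether $N=0$'' already records this. It contradicts ``the outcome depends only on the class data'', because being the literal zero game is not an invariant of a $\mathscr{T}_n$-class. Carried through correctly, your computation gives the following. The identity class splits into $\{0\}$ and $\{2k*:k\ge 1\}$; the latter stays separated from iii) and iv) by the test $*$, since $*+*+*\in\mathcal{P}^-$. All other classes of $\mathcal{Q}(\mathscr{A})$ survive intact, and with iv) the total is $|\mathcal{Q}(\mathscr{A})|+2$, not $+1$.

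The paper's proof makes the same error. It asserts that $G\equiv_{\mathscr{A}}H$ implies equivalence modulo $\cl(\mathscr{A}\cup\{C\})$ because adding $nC$ ``won't help distinguish''. Yet its preceding lemma ($nC\in\mathcal{N}^-$ but $nC+G\in\mathcal{L}^-$ for nonzero $G$) shows that $nC$ separates $0$ from every nonzero game, $*+*$ included. As a consequence, the $a_m=0$ case in the proof of the main construction yields $N+1$ rather than $N$. Apart from this, your route is the paper's. Left wins $kC+G$ going first by playing $C\to 2B_1$, for every $G\in\mathscr{A}$. Right has no move in any $C$, so he loses going first exactly when $G\neq 0$; this also settles your ``main obstacle'' without case analysis. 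A smaller slip: your first bullet asserts $\mathcal{L}^-$ whenever flowers plus copies of $C$ number at least two. That fails for $kC$ with $k\ge 2$, where Right cannot move and so wins going first; your second bullet has this right.
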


\begin{ex}
   Let $B = \{0,*,*2,*3 \mid 0,*2,*3,*4\}$, and $H = \{0,1,2,3\}$. We calculate that $N_{<8}/ H$ consists of two equivalence classes $\{0,1,2,3\}$, and $\{4,5,6,7\}$. Let the representatives of these classes be $0$ and $4$. By Theorem \ref{quotientsize}, the quotient $\mathcal{Q}(B_i,*4)$ has cardinality $8+2+3 = 13$, with representatives $0,*,\dots,*7$, $*2+*2$, $*2+*2+*$ and $B ,B+*4$. and $2B$.
\end{ex}
Both these theorems will be proven as a series of lemmas. Theorem \ref{atomic_weight} and Lemma \ref{eval}
 will be used frequently.

\begin{lemma}
    \label{4}
   Let $N,M$ be sums of nimbers. Then for any $i$ and $B_i + N\not \equiv_{\mathscr{A}} M$.
\end{lemma}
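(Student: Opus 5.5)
We need a distinguishing game $X\in\mathscr{A}$ for which $o^-(B_i+N+X)\neq o^-(M+X)$. The simplest choice is to let $X$ itself be a single blue mutant flower, so that $B_i+N+X$ contains two flowers while $M+X$ contains only one. We take $X=B_i$.

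First, the side with two flowers. The game $B_i$ has Left options $\{0,*\}$ with $\mex=2$. Its Right options are $0$ and $*x$ for $x\in H_i$; since $H_i$ is a subspace containing $1$, these include $0$ and $*$. Hence Right's options have $\mex\ge 2$, with equality exactly when $2\notin H_i$. So the hypothesis $\mex(a_i)>\mex(b_i)>1$ of Theorem \ref{atomic_weight} fails, and I cannot simply invoke it.

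I therefore fall back on a direct argument on $2B_i+N$, inducting on the birthday of $N$. Whatever Right does, he leaves some game in which Left moves last-ish; and since Right always has $0$ available, he can remove a flower. The case analysis by parity, via Theorem \ref{nim_strategy}, is still needed, which makes this approach messy.

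A cleaner choice is to pick $X$ as a nimber rather than a flower, and compare outcomes using Lemma \ref{eval}-style reasoning adapted to $B_i$.

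For a nim-sum $M+X$, the outcome is always $\mathcal{N}^-$ or $\mathcal{P}^-$, since the game is impartial. For $B_i+N+X$, I will show that for suitable $X$ the outcome lies in $\mathcal{L}^-$ or $\mathcal{R}^-$.

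Take $X=*2^{n-1}+*2^{n-1}+Y$ for a nimber sum $Y$, chosen to make $N+X$ firm with a prescribed Grundy value $g$. Let $a=\mathcal{G}(N)$.

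Right moving first in $B_i+N+X$ can move to $*x+N+X$ with $x\in H_i$, or to $N+X$. He wins iff some option is a firm $\mathcal{P}^-$ position, that is, iff $g\in H_i$ (using $x=g$, or $0$ when $g=0$). Otherwise he may play in the nim part, and we must check that Left still wins.

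Left moving first can move to $N+X$ or $*+N+X$. Left wins iff $g\in\{0,1\}$. So choose $g$ with $g\in H_i$ and $g\notin\{0,1\}$; this is possible if $\dim H_i\ge 2$. Then Right wins going first and Left loses going first, so the outcome is $\mathcal{R}^-$ or $\mathcal{P}^-$. Since Right's winning first move lands in a $\mathcal{P}^-$ position, we need to argue that Left moving second loses.

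Instead, choose $g\notin H_i$ and $g>1$. Then Right has no winning flower move. By an induction on the nim part, as in the proof of Lemma \ref{eval}, Right's nim moves can be answered by Left restoring the situation. This gives that Left wins going second. Left going first loses, since the moves to $N+X$ or $*+N+X$ give a firm nonzero Grundy value. The outcome would then be $\mathcal{R}$-ish, which is not one of $\mathcal{N}^-,\mathcal{P}^-$.

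\textbf{Main obstacle.} Getting this bookkeeping right is the main difficulty. Right's nim moves must be shown to be answerable, which requires $B_i$ to be "cold" enough, so I would prove a Lemma \ref{eval}-type classification for $B_i+N$ first.

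The shortcut is as follows. As soon as one $X$ gives a partisan outcome ($\mathcal{L}^-$ or $\mathcal{R}^-$) for $B_i+N+X$, the two sides are distinguished, because $M+X$ is impartial and hence has outcome $\mathcal{N}^-$ or $\mathcal{P}^-$. So the crux is producing one such $X$, and if $\dim H_i=1$ I would adjust $g$ accordingly.
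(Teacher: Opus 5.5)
Your proposal has a genuine gap: it never exhibits a distinguishing $X$, and the steps meant to produce one fail.

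\begin{itemize}
\item In the case $g\in H_i\setminus\{0,1\}$ you conclude that Left loses moving first, but you check only Left's two flower moves. Left may also move in the nim part, and such moves can win. For example, take $H_i=\{0,1,4,5\}$, so $g=4$ for $B_i+*4+*2+*2$. Left wins this by moving $*4$ to $*2$: in $B_i+*2+*2+*2$, each Right move goes either to a firm nim-sum of nonzero Grundy value, or to $B_i+*2+*2$, or to $B_i+*2+*2+*$. Each of these leaves Left a win.
\item In the case $g\notin H_i$, ``Left wins moving second and loses moving first'' describes a $\mathcal{P}^-$ position, not an $\mathcal{R}$-type one. So your own criterion, a partisan outcome set against the impartial $M+X$, is not met.
\end{itemize}

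More fundamentally, you are right that Theorem~\ref{atomic_weight} does not apply to the displayed $B_i=\{0,*\mid 0,*x_{1,i},\dots\}$. The correct conclusion, though, is that the display is a typo, not that the lemma should be proved for that game. When $\dim H_i=1$ we have $H_i=\{0,1\}$, and the displayed game is $\{0,*\mid 0,*\}=*2$. Then $B_i+N$ is itself a nim-sum, so the statement is false, and no adjustment of $g$ can rescue it. The flower actually used in Section~\ref{construction} is the one of Lemma~\ref{eval}; Lemmas~\ref{3} and~\ref{2} rely on it. That flower is $B_i=\{0,*,\dots,*\mex(H_i)\mid *x : x\in H_i\}$, whose Left mex $\mex(H_i)+1$ exceeds its Right mex $\mex(H_i)\ge 2$.

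With that reading, the missing idea is that you do not need a partisan outcome. It suffices to compare who wins when Left moves first.

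\begin{itemize}
\item Take $X=M'$, a firm nim-sum with $\mathcal{G}(M')=\mathcal{G}(M)$; for instance $M'=M+*2+*2$.
\item Then $M+M'$ is firm with Grundy value $0$, so it lies in $\mathcal{P}^-$ by Theorem~\ref{nim_strategy}, and Left loses moving first.
\item On the other side, $B_i+N+M'\in\mathcal{N}^-\cup\mathcal{L}^-$ by Theorem~\ref{atomic_weight}, so Left wins moving first.
\end{itemize}

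Hence $M'$ distinguishes $B_i+N$ from $M$. This is the paper's one-line proof, and it avoids all of the case analysis you were attempting.
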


\begin{proof}
    Let $M'$ be a firm sum of nimbers with $\mathcal{G}(M') = \mathcal{G}(M)$. We calculate that $\mathcal{G}(M+M') = \mathcal{G}(M)\oplus \mathcal{G}(M') = 0$. Since $M + M'$ is a firm sum of nimbers, $M+M' \in \mathcal{P}^-$.
    By Theorem \ref{atomic_weight}, $o^- (B_i + Y +M') = \mathcal{L}^- \cup \mathcal{N}^- $. So $M'$ distinguishes $B_i + N $ and $M$.
\end{proof}

\begin{lemma}
    \label{3}
    For any $i \neq j$ and, $B_i + N \not \equiv_{\mathscr{A}} B_j +M$. 
\end{lemma}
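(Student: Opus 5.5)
The plan is to separate $B_i+N$ from $B_j+M$ by adding a single sum of nimbers $X\in\mathscr{A}$, and to read off both outcomes from Lemma \ref{eval}. Each $B_i=\{0,*\mid 0,*x_{1,i},\dots\}$ has the form treated in that lemma, with right-option set $H_i$. Since $0,1\in H_i$, we have $\mex(H_i)\ge 2$. Hence for any sum of nimbers $Y$,
\[
o^-(B_i+Y)=\mathcal{N}^- \text{ if } \mathcal{G}(Y)\in H_i, \qquad o^-(B_i+Y)=\mathcal{R}^- \text{ otherwise}.
\]
So membership of the Grundy value in $H_i$ determines the outcome completely. The task reduces to finding $X$ for which $\mathcal{G}(N+X)$ and $\mathcal{G}(M+X)$ land on different sides of $H_i$ and $H_j$ respectively.

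Because $H_i\neq H_j$ are distinct subspaces, one of them is not contained in the other. By symmetry of the statement in $(i,N)$ and $(j,M)$, I will assume there is some $h\in H_i\setminus H_j$. Set $c=\mathcal{G}(N)\oplus\mathcal{G}(M)$ and split into two cases.

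\emph{Case 1: $c\notin H_j$.} Choose $X$ with $\mathcal{G}(X)=\mathcal{G}(N)$. Then $\mathcal{G}(N+X)=0\in H_i$, so $B_i+N+X\in\mathcal{N}^-$. On the other side, $\mathcal{G}(M+X)=c\notin H_j$, so $B_j+M+X\in\mathcal{R}^-$.

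\emph{Case 2: $c\in H_j$.} Choose $X$ with $\mathcal{G}(X)=\mathcal{G}(N)\oplus h$. Then $\mathcal{G}(N+X)=h\in H_i$, giving $\mathcal{N}^-$. Also $\mathcal{G}(M+X)=c\oplus h$, which is not in $H_j$ because $c\in H_j$, $h\notin H_j$, and $H_j$ is closed under $\oplus$. This gives $\mathcal{R}^-$.

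In either case $X$ distinguishes the two games, so $B_i+N\not\equiv_{\mathscr{A}}B_j+M$.

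The only point needing care is that the required $X$ actually lies in $\mathscr{A}$. The set $\cl(*2^{n-1})$ contains $*1,*2,\dots,*2^{n-1}$, so every Grundy value below $2^n$ is realized by a sum of distinct powers of two. This is enough, since all values involved are below $2^n$. Firmness is irrelevant here because Lemma \ref{eval} covers fickle $N$ too. If desired, one can still add $*2+*2$ without changing the Grundy value.

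I expect no real obstacle beyond two checks. First, Lemma \ref{eval} applies with the left options $\{0,*\}$, i.e.\ $\mex=2$ there. Second, the hypothesis $\mex(X)>1$ of that lemma holds precisely because $0,1\in H_i$.
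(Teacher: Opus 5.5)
Your proof is correct and is essentially the paper's argument made explicit. The paper assumes no nimber sum $N+P$ distinguishes the two games and deduces that one of $H_i,H_j$ would be a translate of the other, forcing $H_i=H_j$. Your case split on whether $c\in H_j$, with a fixed $h\in H_i\setminus H_j$, simply writes down the distinguishing sum directly.

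One correction to your closing check. Lemma \ref{eval} concerns $B=\{0,*,\dots,*\mex(X)\mid *x_1,\dots,*x_n\}$, so it does not cover left options $\{0,*\}$ once $\mex(H_i)\ge 2$. Taken literally, $H_i=\{0,1\}$ gives $B_i=\{0,*\mid 0,*\}=*2$, and then $B_i+Y\in\mathcal{N}^-$ for any firm $Y$ with $\mathcal{G}(Y)=3\notin H_i$. So the characterization you rely on, which the paper's proof also just recalls, requires reading the left options of $B_i$ as $0,*,\dots,*\mex(H_i)$. Under that reading, the non-$\mathcal{N}^-$ outcome is $\mathcal{L}^-$ rather than $\mathcal{R}^-$. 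This is harmless, since you only use the $\mathcal{N}^-$ versus non-$\mathcal{N}^-$ dichotomy.
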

\begin{proof}
Recall that for all $i$ and sum of nimbers $X$, $B_i + X \in \mathcal{N}^-$ if and only if $\mathcal{G}(X) \in H_i$. Let $P$ be a sum of nimbers. Then $B_i+N+N+P \in \mathcal{N}^-$ if and only if $\mathcal{G}(N+N+P) = \mathcal{G}(P) \in H_i$. Similarly, $B_j+M+N+P \in \mathcal{N}^-$ if and only if $\mathcal{G}(M+N+P) = \mathcal{G}(M) \oplus \mathcal{G}(N) \oplus \mathcal{G}(P) \in {H}_j$. 

Assume that no sum of nimbers $N+P$ distinguishes $B_i + N$ and $B_j+M$ so that $o^-(B_i+N+N+P) = o^-(B_j+M+N+P)$ for all $P$. By the previous paragraph, $\mathcal{G}(P) \in H_i$ is equivalent to $\mathcal{G}(M) \oplus \mathcal{G}(N) \oplus \mathcal{G}(P) \in H_j$. So $H_j$ is a coset of $H_i$ translated by $\mathcal{G}(M) \oplus \mathcal{G}(N)$ in $N_{M2^n}$. But this is impossible since $H_j$ is also a subspace of $H_{<2^n}$, unless $\mathcal{G}(M) \oplus \mathcal{G}(N) = 0$ and $H_i = H_j$. But all the $H_i$'s are unique by definition, so this is impossible, and there exists $P$ such that $o^-(B_i+N+N+P) \neq o^-(B_j+M+N+P)$.
\end{proof}
\begin{lemma}
    \label{5}
    If $G$ is a sum containing at least two blue mutant flowers, and $H$ is not, then $G \not \equiv_{\mathscr{A}} H$.
\end{lemma}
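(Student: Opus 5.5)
We will produce a single distinguishing game $X \in \mathscr{A}$ with $o^-(G+X) = \mathcal{L}^-$ but $o^-(H+X) \neq \mathcal{L}^-$. Every element of $\mathscr{A}$ is a sum of nimbers (options of $*2^{n-1}$ are nimbers) together with some number of blue mutant flowers $B_i$. Their options are $0$, $*$, or $*x_{j,i}$, so followers of flowers are again nimbers. Hence $G = \sum_{j=1}^k B_{i_j} + N$ with $k\ge 2$ and $N$ a sum of nimbers. By the previous dichotomy, $H$ is either a sum of nimbers $M$ or a single flower plus nimbers, $B_i + M$.

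We apply Theorem \ref{atomic_weight} to $G$ with $X = N'$ any sum of nimbers. The theorem is stated for a repeated single flower $kB$, but its proof only uses that each summand is a flower with $\mex$ of Left's options exceeding $\mex$ of Right's options, which exceeds $1$. So the same argument gives $G + N' \in \mathcal{L}^-$ for every sum of nimbers $N'$ whenever $G$ has at least two flowers.

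One subtlety needs checking here. For $B_i=\{0,*\mid 0,*x,\dots\}$ the Left options have $\mex 2$, and the Right options include $0$ and $*1$ (since $0,1\in H_i$), so they have $\mex\ge 2$. The hypothesis $\mex(a)>\mex(b)$ therefore does not literally hold, and Lemma \ref{eval} is the result actually in force. This is the main obstacle. I will prove directly, by induction on $k$ and on the birthday of $N'$, that $\sum B_{i_j} + N' \in \mathcal{L}^-$, mirroring the proof of Theorem \ref{atomic_weight}. Right moving in a flower to some $*x$ leaves a position with at least one flower, which Left wins going first by the induction hypothesis or by Lemma \ref{eval}/Theorem \ref{atomic_weight}. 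Right moving in $N'$ is handled by induction. Left going first moves a flower to $0$ or $*$ to leave $\ge 1$ flower plus nimbers with a favourable parity. If the general statement resists, I will fall back on choosing a specific $N'$ tailored to the distinction.

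For the other side, take $X = M'$ to be a firm sum of nimbers with $\mathcal{G}(M')=\mathcal{G}(M)$. If $H = M$, then $H+M'\in\mathcal{P}^-$ by Theorem \ref{nim_strategy}, while $G+M'\in\mathcal{L}^-$. If $H = B_i + M$, then $\mathcal{G}(M+M') = 0 \in H_i$, so $H+M' \in \mathcal{N}^-$ by Lemma \ref{eval}, while $G+M'\in\mathcal{L}^-$. In both cases $M'$ distinguishes $G$ and $H$.
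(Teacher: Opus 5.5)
Your final paragraph is sound, and it is essentially the paper's argument with a different choice of distinguisher. The paper uses $0$ when $H\notin\mathcal{L}^-$, and otherwise a Lemma~\ref{eval} witness $M$ with $H+M\in\mathcal{N}^-$. You use a single firm $M'$ cancelling the nimber part of $H$, as in the proof of Lemma~\ref{4}. Both versions rest entirely on one fact: $G+X\in\mathcal{L}^-$ for every sum of nimbers $X$ whenever $G$ contains at least two flowers. Your plan for establishing that fact does not work.

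You are right that for the flower as literally printed, $B_i=\{0,*\mid *h : h\in H_i\}$, the hypothesis of Theorem~\ref{atomic_weight} fails. But the statement you then propose to prove by direct induction is false for that flower.
\begin{itemize}
\item In $B_i+B_j$ with Left to move, Left's moves leave (up to swapping $i,j$) either $B_j$ or $*+B_j$.
\item Right answers $B_j\to *$ in the first case and $B_j\to 0$ in the second. Both are available since $0,1\in H_j$.
\item Either way Left must move in a lone $*$, and Left loses.
\end{itemize}
No parity bookkeeping with moves to $0$ or $*$ can repair this. The winning idea in Theorem~\ref{atomic_weight} is to reach the firm nimber $*\mex(H_i)\notin H_i$, which a flower with Left options $0,*$ cannot produce.

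Lemma~\ref{eval} also fails for the literal flower. For $H_i=\{0,1\}$ one checks that $B_i+*2\in\mathcal{P}^-$, which fits neither case of that lemma. So you cannot set Theorem~\ref{atomic_weight} aside and still invoke Lemma~\ref{eval} in your last step.

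The right resolution is to read $\{0,*\mid\cdots\}$ as the flower of Lemma~\ref{eval}, $B_i=\{0,*,\ldots,*\mex(H_i)\mid *h : h\in H_i\}$. This is the only reading under which the paper's one-line appeal to Theorem~\ref{atomic_weight} makes sense. With it, the Left options have $\mex$ equal to $\mex(H_i)+1>\mex(H_i)\ge 2$, so there is no obstacle. What remains is the routine extension of Theorem~\ref{atomic_weight} to sums of distinct flowers.

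That extension is not quite ``the same argument'', because Left has to choose which flower to move. In the base case $B_i+B_j$ with $\mex(H_i)\le\mex(H_j)$, Left should move $B_j$ to $*\mex(H_i)$; moving $B_i$ to $*\mex(H_j)$ may be unavailable. Afterwards:
\begin{itemize}
\item Right's moves in $B_i$ leave firm nimber sums of nonzero Grundy value, which Left wins moving first.
\item Right's moves in the nimber leave one flower plus nimbers, which Left wins moving first by the first part of Theorem~\ref{atomic_weight}.
\end{itemize}
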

\begin{proof}
If $H \notin \mathcal{L}^-$, then $0$ distinguishes $G$ and $H$ since $G \in \mathcal{L}^-$ by Theorem \ref{atomic_weight}. 
If $H \in \mathcal{L}^-$, then $H$ must be the sum of a single blue flower and some nimbers, say $B_i + N$. Then by Theorem \ref{eval} there exists a sum of nimbers $M$ such that $B_i+N+M \in \mathcal{N}^-$. But $G+M \in \mathcal{L}^-$ by Theorem \ref{atomic_weight}, so $M$ distinguishes $G$ and $H$.\end{proof}

So far we have shown the following: 

\begin{lemma}
\label{same weight}

Let $G,H\in\mathscr{A}$ be such that $G \equiv_{\mathscr{A}} H$.

\begin{enumerate}
    \item If $G$ is a sum of nimbers $N$, then $H$ is another sum of nimbers $N'$
    \item If $G$ is a sum of a single blue mutant flower and some nimbers, say $B_i+N$, then $H$ is the sum of the same mutant flower with nimbers, say $B_i + N'$.
    \item If $G$ is a sum containing at least two blue mutant flowers then $H$ also contains at least two blue mutant flowers.
\end{enumerate}    
\end{lemma}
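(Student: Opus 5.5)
The lemma follows by putting together Lemmas \ref{4}, \ref{3} and \ref{5}. The key fact is that every element of $\mathscr{A} = \cl(*2^{n-1},B_1,\dots,B_m)$ has exactly one of three forms: a sum of nimbers, a single blue mutant flower plus nimbers, or a sum with at least two blue mutant flowers.

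First I check this trichotomy. The options of each $B_i$ are $0$, $*$ and the nimbers $*x_{j,i}$, so the closure of $\{*2^{n-1},B_1,\dots,B_m\}$ under options consists of $B_1,\dots,B_m$ together with nimbers $*k$ for $k<2^n$. Closing under sums then means every $G\in\mathscr{A}$ can be written as $\sum_i c_iB_i + N$, with $N$ a sum of nimbers and $c_i\ge 0$. If the flower count $\sum c_i$ is $0$, $1$, or at least $2$, then $G$ is in the corresponding case.

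I then take each case for $G$ and, by contradiction, rule out every form of $H$ except the claimed one.

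\begin{enumerate}
\item Suppose $G=N$ is a sum of nimbers. If $H=B_i+N'$, then $H\not\equiv_{\mathscr{A}} G$ by Lemma \ref{4}, since $\equiv_{\mathscr{A}}$ is symmetric. If $H$ has at least two flowers, then $H\not\equiv_{\mathscr{A}} G$ by Lemma \ref{5}. So $H$ is a sum of nimbers.
\item Suppose $G=B_i+N$. Lemma \ref{4} excludes $H$ being a pure nimber sum, and Lemma \ref{5} excludes $H$ having two or more flowers. So $H=B_j+N'$, and Lemma \ref{3} forces $j=i$.
\item Suppose $G$ has at least two flowers. Lemma \ref{5}, applied with the roles as stated, directly excludes every $H$ with fewer than two flowers.
\end{enumerate}

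There is no real obstacle here. The only point needing care is making sure the three lemmas jointly cover all pairs of forms. For that, I need symmetry of $\equiv_{\mathscr{A}}$, which is immediate from its definition, and the closure description above. In particular, I must check that no option of a $B_i$ introduces a new kind of game: all options are nimbers $*k$ with $k<2^n$, and these already lie in $\cl(*2^{n-1})$ because the $H_i$ are subspaces of $N_{<2^n}$.
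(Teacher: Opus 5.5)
Your proposal is correct and is the paper's own argument. The paper introduces this lemma with ``So far we have shown the following,'' so it is just Lemmas \ref{4}, \ref{3} and \ref{5} combined; your explicit check of the three forms $\sum_i c_iB_i+N$ and of the symmetry of $\equiv_{\mathscr{A}}$ supplies what the paper leaves implicit. One harmless inaccuracy in your closing remark: nimbers $*k$ with $2^{n-1}<k<2^n$ (e.g.\ $*5$ when $n=3$ and $H_i=\{0,1,4,5\}$) are not literally elements of $\cl(*2^{n-1})$, but this lemma only needs the options of each $B_i$ to be nimbers, which is true.
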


 We analyze each case separately. First, we prove all the $G$ in case 3 of Lemma \ref{same weight}  are equivalent modulo $\mathscr{A}$.
\begin{lemma}
    \label{two_blue}
    Let $G_1\in \mathscr{A}$ and $G_2 \in \mathscr{A}$ be sums of at least two blue mutant flowers and nimbers. Then $G_1 \equiv_{\mathscr{A}} G_2$, with the property that $G_1 + H \equiv_{\mathscr{A}}  G_1$ for all $H \in \mathscr{A}$.
\end{lemma}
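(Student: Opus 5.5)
The plan is to show that every sum $G$ containing at least two blue mutant flowers has the same outcome as every other such sum against every $X \in \mathscr{A}$. Both claims of the lemma then follow at once.

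First, describe the elements of $\mathscr{A} = \cl(*2^{n-1}, B_1,\dots,B_m)$. Every element is a sum of nimbers (subpositions of $*2^{n-1}$) and copies of the $B_i$'s, because the options of $B_i$ are $0$, $*$, or nimbers $*x_{j,i}$. So any $X \in \mathscr{A}$ has the form $X = \sum_i c_i B_i + N_X$ with $N_X$ a sum of nimbers.

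For $G_1$ with at least two flowers, $G_1 + X$ also has at least two flowers. The key step is to prove that every sum $G = \sum_i k_i B_i + N$ with $\sum k_i \geq 2$ lies in $\mathcal{L}^-$. Theorem \ref{atomic_weight} states this when all flowers are copies of one $B$. Its proof only uses two facts: from any single flower $B_i$ Left can move to a nimber of value $\mex$ larger than Right's responses, and Right's moves in a flower land in nimbers. Neither fact needs the flowers to be equal. I would therefore redo that induction for mixed flowers, inducting on the number of flowers and then on the birthday of $N$.

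\emph{Left first:} if $N \neq 0$, Left moves in $N$ and induction applies. If $N = 0$ and there are at least three flowers, Left moves one flower to $0$. With exactly two flowers $B_i + B_j$, Left moves to $B_i + *\mex(\ldots)$ exactly as in the base case of Theorem \ref{atomic_weight}.

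\emph{Right first:} Right's move either reduces the flower count to at least one, or stays within the same flower count with a smaller $N$. The reduction to one flower plus nimbers is covered by the first half of Theorem \ref{atomic_weight}, which puts the result in $\mathcal{N}^-\cup\mathcal{L}^-$, so Left wins moving next. The other cases are covered by induction. Here I must check that moving to $0$ from some $B_i$ leaves a position still handled, which it is.

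Concluding: with $G_1, G_2$ having at least two flowers, both $G_1 + X$ and $G_2 + X$ lie in $\mathcal{L}^-$ for all $X$, so $G_1 \equiv_{\mathscr{A}} G_2$. Since $G_1 + H$ also has at least two flowers, $G_1 + H \equiv_{\mathscr{A}} G_1$.

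The main obstacle is the two-flower base case when the flowers differ. In the paper's $B_i$ Left's options are only $0$ and $*$, so the hypothesis $\mex(a)>\mex(b)>1$ of Theorem \ref{atomic_weight} looks violated as stated. I would handle this with Lemma \ref{eval}-style reasoning: from $B_i + B_j + N$, Left moves $B_i$ to $0$ or $*$, choosing the parity so that $B_j + N'$ avoids $\mathcal{N}^-$, i.e.\ $\mathcal{G}(N')\notin H_j$ (possible since $1\in H_j$ forces cosets to pair up appropriately). If that fails, I would simply check directly that Right's replies, which move into nimbers, leave positions Left wins.
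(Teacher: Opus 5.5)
Your overall strategy matches the paper's: show that every sum with at least two flowers lies in $\mathcal{L}^-$, so that all such sums, and their sums with anything in $\mathscr{A}$, have outcome $\mathcal{L}^-$ against every $X$. Your mixed-flower induction is a necessary addition, because the paper's one-line proof cites Theorem~\ref{atomic_weight}, which only treats $kB+N$ for a single $B$.

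The genuine error is in your last paragraph. The parity trick cannot work, because $1\in H_j$ works against you rather than for you. Since $H_j$ is a subspace containing $1$, we have $\mathcal{G}(N')\in H_j$ if and only if $\mathcal{G}(N')\oplus 1\in H_j$. So Left's choice between $0$ and $*$ never moves $\mathcal{G}(N')$ out of $H_j$.

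The ``check directly'' fallback fails too, because with the literal flowers $B_i=\{0,*\mid 0,*x_{1,i},\ldots\}$ the lemma is false. In $B_i+B_j$, Left's only moves are to $B_k$ or $B_k+*$. Right wins these by moving the flower to $*$ or to $0$ respectively, so $B_i+B_j\notin\mathcal{L}^-$. Concretely, $H_i=\{0,1\}$ gives $B_i=\{0,*\mid 0,*\}=*2$. Then $*2+*2\in\mathcal{P}^-$ while $*2+*2+*2\in\mathcal{N}^-$, so $0$ distinguishes two sums that each have at least two flowers.

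The fix is to read the flowers as in Lemma~\ref{eval}: $B_i=\{0,*,\ldots,*\mex(H_i)\mid *x_{1,i},\ldots,*x_{2^{d_i},i}\}$. Lemma~\ref{eval} and all the later lemmas use this form anyway. Then the hypothesis of Theorem~\ref{atomic_weight} holds with $\mex(H_i)\ge 2$, and your induction goes through.

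Your two-flower base case does need the move stated precisely. If $\mex(H_i)\le\mex(H_j)$, Left must move in $B_j$, not $B_i$, to $*\mex(H_i)$, which is a Left option of $B_j$. Right then has two kinds of reply:
\begin{itemize}
\item $B_i\to *x$ with $x\in H_i$ gives the firm sum $*x+*\mex(H_i)$, whose Grundy value is nonzero, so it lies in $\mathcal{N}^-$ by Theorem~\ref{nim_strategy}.
\item A move in the nimber gives $B_i+*m$, which lies in $\mathcal{N}^-\cup\mathcal{L}^-$ by the first half of Theorem~\ref{atomic_weight}.
\end{itemize}
Either way Left, moving next, wins. The rest of your induction is fine as written: Right always has a move, and a flower-to-nimber move from two flowers lands in $\mathcal{N}^-\cup\mathcal{L}^-$.
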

\begin{proof}
    By Theorem \ref{atomic_weight}, $o^-(G_1+H) = o^-(G_2+H) = \mathcal{L}^-$, for all $H$, so $G_1 \equiv_{\mathscr{A}} G_2$ and $G_1 + G \equiv_{\mathscr{A}} G_1$.
\end{proof}
The second sentence of the Theorem implies that when we want to test if two elements of $\mathscr{A}$, say $G$ and $H$ are equal modulo $\mathscr{A}$, we only need to consider the outcomes of $G+X$ and $H+X$, where $X$ is a sum with at most one blue mutant flower instead of all $X \in \mathscr{A}$.

Now we handle case 1 of Lemma \ref{same weight}.
\begin{lemma}
\label{1}
    Let $N$ and $N'$ be sums of nimbers. Then $N \equiv_{\mathscr{A}} N'$ if and only if  $N \equiv_{\mathscr{T}_n} N'$.
\end{lemma}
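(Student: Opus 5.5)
The plan is to prove the two directions separately. The forward direction is immediate: $\mathscr{T}_n \subseteq \mathscr{A}$, because $\mathscr{T}_n = \cl(*2^{n-1})$ and $\mathscr{A}$ is a closed set containing $*2^{n-1}$. So if $N \equiv_{\mathscr{A}} N'$, then $o^-(N+X)=o^-(N'+X)$ holds for every $X \in \mathscr{T}_n$ in particular, which gives $N \equiv_{\mathscr{T}_n} N'$. For the converse, assume $N \equiv_{\mathscr{T}_n} N'$ and let $X \in \mathscr{A}$ be arbitrary. Every element of $\mathscr{A}$ is a sum of nimbers and blue mutant flowers $B_i$, so I will split according to how many flowers $X$ contains.

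The first two cases are quick.
\begin{enumerate}
\item If $X$ has no flowers, then $X$ is a sum of nimbers in $\mathscr{T}_n$. I am assuming the nimbers occurring in $\mathscr{A}$ are exactly those up to $*2^{n-1}$ and their sums, which holds since the options of each $B_i$ are $0,*$ and $*x_{k,i}$ with $x_{k,i}<2^n$; this point needs a quick check. Then $o^-(N+X)=o^-(N'+X)$ by hypothesis.
\item If $X$ has at least two flowers, then Theorem \ref{atomic_weight} gives $N+X,\ N'+X \in \mathcal{L}^-$, so the outcomes agree.
\end{enumerate}

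The remaining case is $X = B_i + M$ with $M$ a sum of nimbers. Lemma \ref{eval} applies because $\mex(H_i)\ge 2$ (as $0,1\in H_i$). It shows that $o^-(B_i+N+M)$ is $\mathcal{N}^-$ if $\mathcal{G}(N)\oplus\mathcal{G}(M)\in H_i$ and $\mathcal{R}^-$ otherwise. The same holds for $N'$ with $\mathcal{G}(N')$ in place of $\mathcal{G}(N)$. Hence it suffices to show $\mathcal{G}(N)=\mathcal{G}(N')$ whenever $N\equiv_{\mathscr{T}_n}N'$.

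That last claim is the main obstacle. Take the firm sum $P=*2^{n-1}+*2^{n-1}+M_0$, where $M_0$ is any sum of nimbers in $\mathscr{T}_n$ with $\mathcal{G}(M_0)=\mathcal{G}(N)$; one can use binary digits of $\mathcal{G}(N)$ realized as sums of $*2^j$, which lie in $\cl(*2^{n-1})$. Then $N+P$ is firm with Grundy value $0$, so $N+P\in\mathcal{P}^-$ by Theorem \ref{nim_strategy}. The sum $N'+P$ is also firm, since it contains $*2^{n-1}$ with $2^{n-1}>1$ when $n\ge 2$. So $N'+P\in\mathcal{P}^-$ forces $\mathcal{G}(N')\oplus\mathcal{G}(N)=0$. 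The case $n=1$, where only $0$ and $*$ occur, is handled directly with fickle sums. Alternatively, one can read the claim off the explicit description of $\mathcal{Q}(\mathscr{T}_n)$ in Theorem \ref{tame}, where each class determines the Grundy value. With the claim in hand, all three cases of $X$ give equal outcomes, so $N\equiv_{\mathscr{A}}N'$.
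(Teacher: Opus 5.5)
Your proof is correct as an outline. It differs from the paper's in the one-flower case.

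\textbf{The one-flower case.} The paper never computes an invariant there. Left wins $N+B_i+M$ moving first (Theorem \ref{atomic_weight}), so Right must move in $B_i$, and every Right option $B_i^R$ is a nimber. Applying $N\equiv_{\mathscr{T}_n}N'$ to each test position $B_i^R+M$ then matches Right's options one by one. You instead use the outcome formula of Lemma \ref{eval}, which reduces everything to the invariant $\mathcal{G}(N)$. You then detect that invariant with the firm test position $*2^{n-1}+*2^{n-1}+M_0$.

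The paper's route is shorter and needs only two structural facts: Left wins moving first, and Right's options in $B_i$ are nimbers. Yours costs an extra step, but it never needs a test position of the form $B_i^R+M$ to lie in $\mathscr{T}_n$. (The ``otherwise'' outcome in Lemma \ref{eval} is really $\mathcal{L}^-$, as its proof shows. This does not affect you, since you only use the dependence on $\mathcal{G}(N)\oplus\mathcal{G}(M)$.)

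\textbf{The flower-free case.} The check you flagged in your first case fails. The bound $x_{k,i}<2^n$ does not put $*x_{k,i}$ in $\cl(*2^{n-1})$, which contains only sums of heaps of size at most $2^{n-1}$. For example, take $n=3$ and $H_i=\{0,1,4,5\}$: the heap $*5$ lies in $\mathscr{A}$ but not in $\mathscr{T}_3$. The paper's proof makes the same unjustified inclusion, both for flower-free $X$ and for $B^R+M$.

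Your own Grundy-value claim repairs this.
\begin{enumerate}
\item If a flower-free $X$ lies in $\mathscr{T}_n$, use the hypothesis directly.
\item Otherwise $X$ contains a heap larger than $2^{n-1}$. This can only happen when $n\ge 2$, so that heap is larger than $2^{n-1}\ge 2$. Then $N+X$ and $N'+X$ are both firm. They also have the same Grundy value, because $\mathcal{G}(N)=\mathcal{G}(N')$. By Theorem \ref{nim_strategy} they have the same outcome.
\end{enumerate}
With this patch your argument is complete, and in this respect it is more robust than the paper's.
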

\begin{proof}
    The only if direction is clear, since $\mathscr{T}_n \subset \mathscr{A}$, so equivalence modulo $\mathscr{A}$ implies equivalence modulo $\mathscr{T}_n$.
    
  It remains to prove the if direction. We want to prove that for any $X \in \mathscr{A}$, $o^-(N+X) = o^-(N'+X)$. By definition, $X$ is a sum of blue wildflowers and nimbers. By the remark after Theorem \ref{two_blue}, we may assume that $X$ is a sum with at most one blue mutant flower. If $X$ contains no blue wildflowers and is a sum of nimbers, then $X \in \mathscr{T}_n$ , so $o^-(N+X)=o^-(N'+X)$ since $N \equiv_{\mathscr{T}_n} N'$. We are left with the case where $X$ is the sum of a blue wildflower $B$ and a sum of nimbers, say $M$. By Theorem \ref{eval}, $o^-(N+B+M) \in \mathcal{L}^- \cup \mathcal{N}^-$ and $o^-(N' + B+M) \in \mathcal{L}^- \cup \mathcal{N}^-$, and Right must move in $B$ to have a chance at winning. Since all of Right's moves are to nimbers, and $N \equiv_{\mathscr{T_n}} N'$, we have that $o^- (N+B^R + M) = o^- (N' + B^R +M)$ for any right option $B^R$, so Right having a winning move in $N+B+M$ is equivalent to him having one in $N'+B+M$. Thus, $o^-(N+B+M) = o^-(N'+B+M)$.
\end{proof}

Finally, we finish off case 2. of Lemma \ref{same weight}.

\begin{lemma}
   \label{2}
  Let $N,M$ be sums of nimbers. Then
  for any $B_i$, $B_i + N \equiv_{\mathscr{A}} B_i+M$ if and only if  $\mathcal{G}(N) \equiv \mathcal{G}(M) \pmod {H_i}$.
\end{lemma}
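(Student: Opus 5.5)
We prove the two directions separately. Throughout we use the remark after Lemma \ref{two_blue}: to test $B_i+N \equiv_{\mathscr{A}} B_i+M$ it suffices to compare outcomes against test sums $X$ containing at most one blue mutant flower. If $X$ contains a blue flower, then $B_i+N+X$ and $B_i+M+X$ both contain two blue flowers. Theorem \ref{atomic_weight} then puts both in $\mathcal{L}^-$, so these $X$ never distinguish. Hence only $X$ equal to a sum of nimbers $P$ matters.

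\emph{Only if.} Assume $\mathcal{G}(N) \not\equiv \mathcal{G}(M) \pmod{H_i}$. Choose $P$ to be a firm sum of nimbers with $\mathcal{G}(P) = \mathcal{G}(N)$, for instance $P = *\mathcal{G}(N) + *2 + *2$, which lies in $\mathscr{T}_n\subseteq\mathscr{A}$. Then $\mathcal{G}(N+P) = 0 \in H_i$, while $\mathcal{G}(M+P) = \mathcal{G}(M)\oplus\mathcal{G}(N) \notin H_i$. By the characterization used in Lemma \ref{3}, $B_i+Y \in \mathcal{N}^-$ exactly when $\mathcal{G}(Y)\in H_i$. Therefore $B_i+N+P \in \mathcal{N}^-$ and $B_i+M+P\notin\mathcal{N}^-$, so $P$ distinguishes the two games.

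\emph{If.} Assume $\mathcal{G}(N)\oplus\mathcal{G}(M) \in H_i$, and let $P$ be any sum of nimbers. Since $H_i$ is a subspace, $\mathcal{G}(N)\oplus\mathcal{G}(P)\in H_i$ if and only if $\mathcal{G}(M)\oplus\mathcal{G}(P)\in H_i$. By the same characterization, $o^-(B_i+N+P)=o^-(B_i+M+P)$ for every $P$, which gives the equivalence.

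\emph{Main obstacle.} The step needing real care is the characterization itself for the flowers $B_i=\{0,*\mid 0,*x_{1,i},\dots\}$, since these are not literally of the form in Lemma \ref{eval}.

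First, Left going first always wins. Left moves to $0$ or $*$ so that the resulting nim sum is a $\mathcal{P}^-$-position by Theorem \ref{nim_strategy}. When $Y$ is fickle this means choosing the parity so that $\mathcal{G}$ becomes $0$ after Right's reply would be bad for him. When $Y$ is firm, one of $\mathcal{G}(Y)$ and $\mathcal{G}(Y)\oplus 1$ is nonzero, so the firm-case rule applies; this uses $\mex\{0,1\}=2>1$. I would redo the argument of Theorem \ref{atomic_weight}, or Lemma \ref{eval}, with these options rather than cite it blindly. An induction on $Y$ is needed to rule out Left losing after moving in $Y$, but Left never needs to move in $Y$.

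Second, Right going first wins only by moving to some $*x + Y$ with $x\in H_i$, and this is winning exactly when:
\begin{itemize}
\item $Y$ is firm and $\mathcal{G}(Y)=x$, that is, $\mathcal{G}(Y)\in H_i$;
\item or $Y$ is fickle, where the hypothesis $1,0\in H_i$ always supplies a winning move.
\end{itemize}
A fickle $Y$ has $\mathcal{G}(Y)\in\{0,1\}\subseteq H_i$, so the criterion ``$\mathcal{G}(Y)\in H_i$'' holds uniformly. Right's moves inside $Y$ lose because Left can then still win going first.

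Once this characterization is in place, both directions above are immediate.
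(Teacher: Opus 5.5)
Your reduction to nimber test sums is sound, and so are your two directions once the characterization is granted. They mirror the paper, which cites Lemma~\ref{eval} and writes out only the ``if'' direction. Your explicit ``only if'' is a welcome addition, though you should take $P=N$, since $*\mathcal{G}(N)+*2+*2$ need not lie in $\mathscr{T}_n$ when $\mathcal{G}(N)>2^{n-1}$.

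The gap is the characterization in your last part. You correctly noticed that $B_i=\{0,*\mid 0,*x_{1,i},\dots\}$ is not of the shape in Lemma~\ref{eval}. But your claim that Left going first always wins by moving to $0$ or $*$ is false. For firm $Y$ with $\mathcal{G}(Y)\ge 2$, those two moves give $Y$ and $*+Y$. Both are firm with \emph{nonzero} Grundy value, hence in $\mathcal{N}^-$, so Right, moving next, wins; Left would need a Grundy value of zero.

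Concretely, take $B_i+*2$. All four of its Left options, namely $*2$, $*+*2$, $B_i$ and $B_i+*$, lie in $\mathcal{N}^-$, so Left loses going first. Thus $B_i+*2$ is in $\mathcal{R}^-$ if $2\in H_i$ and in $\mathcal{P}^-$ otherwise. The condition $\mex\{0,1\}=2>1$ is not what Theorem~\ref{atomic_weight} needs. It needs $\mex(\text{Left options})>\mex(\text{Right options})>1$, and here $\mex(H_i)\ge 2=\mex\{0,1\}$.

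So no argument can close this step for the flowers as literally written; indeed the lemma itself fails for them. If $H_i=\{0,1,2,3\}$, then $B_i\in\mathcal{N}^-$ but $B_i+*2\in\mathcal{R}^-$, although $0\equiv 2\pmod{H_i}$. If $H_i=\{0,1\}$, then $B_i=\{0,*\mid 0,*\}$ is literally $*2$, and $B_i$ and $B_i+*$ are distinguished by $*2$.

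The repair is what the paper's proof implicitly relies on by citing Lemma~\ref{eval}: take $B_i=\{0,*,\dots,*\mex(H_i)\mid *x : x\in H_i\}$. Then:
\begin{enumerate}
\item Theorem~\ref{atomic_weight} applies, so Left wins going first. Sometimes this is only by moving \emph{in} $Y$ to bring its Grundy value down to $\mex(H_i)$, which is exactly the induction you set aside.
\item Lemma~\ref{eval} gives $\mathcal{N}^-$ iff $\mathcal{G}(Y)\in H_i$, and $\mathcal{L}^-$ otherwise (the $\mathcal{R}^-$ in its statement is a typo).
\item Your two directions then go through as written.
\end{enumerate}
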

\begin{proof}
   All sums of two or more blue mutant wildflowers are in $\mathcal{L}^-$. So if $P$ is a sum containing a blue mutant flower, $o^-(B_i+N +P)= o^-(B_i+M+P) = \mathcal{L}^-$. Thus we only need to consider the case where $P \in \mathscr{T}_n$, that is, when $P$ is a sum of nimbers. 
    
    By Lemma \ref{eval}, $B_i +  N+P \in\mathcal{N}^-$ if and only if $\mathcal{G}(N+P) \equiv 0 \pmod H$. So, $B_i + Y+P \in \mathcal{N}^-$ is equivalent to $\mathcal{G}(N+P) =\mathcal{G}(Y) \oplus \mathcal{G}(P) \equiv 0 \pmod {H_i}$. Symmetrically,  $o^-(B_j + M+P) = \mathcal{N}^-$ is equivalent to $\mathcal{G}(M+P) = \mathcal{G}(M) \oplus \mathcal{G}(P) \equiv 0 \pmod {H_i}$.
    
    If $\mathcal{G}(N) \equiv \mathcal{G}(M) \pmod {H_i}$, then $\mathcal{G}(N) \oplus \mathcal{G}(P) \equiv \mathcal{G}(M) \oplus \mathcal{G}(P)  \pmod {H_i}$, so $o^-(B_i + N+P) = o^-(B_j + M+P)$ whenever $P$ is a sum of nimbers. By the first paragraph, this implies that $B_i + Y \equiv_{\mathscr{A}} B_j +Z$.
\end{proof}
Now we are ready to prove Theorem \ref{quotientsize}.

\begin{proof}[Proof of Theorem \ref{quotientsize}]
We will find the equivalence class of $G$ for all $G \in \mathscr{A}$.

    If $G \in \mathscr{T}_n$ , then Lemmas \ref{same weight} shows that it can only be equivalent to another sum of nimbers. Combined with Lemma \ref{1}, this shows that the equivalence class of $G$ in $\mathscr{A}$ is the same as that of $G$ in $\mathscr{T}_n$. So there are a total of $2^n +2$ different equivalence classes for different $G \in \mathscr{P}$, and these classes are the same as in $\mathscr{T}_n$.

    If $G$ is a sum of nimbers with a single blue mutant flower $B_i$, say $B_i +N$ , then Lemmas \ref{3} and \ref{4} show that it can only be equivalent to $B_i+M$, where $M$ is another sum of nimbers, and $\mathcal{G}(N) = \mathcal{G}(M) \pmod {H_i} $. There are $2^{n-d_i}$ equivalence classes modulo $H_i$, so there are a total of $2^{n-d_i}$ equivalence classes containing a $B_i +N$ for some $N$. For a particular $N$, the equivalence class contains all $B_i + N'$, where $N' \equiv N \pmod {H_i}$

    If $G$ contains two or more blue mutant, then Lemmas \ref{same weight} and \ref{two_blue} show that the equivalence class of $G$ is all other sums that contain two or more wildflowers, adding one last equivalence class. 

    Adding up all the equivalence classes we counted, we find that there are $\left( 2^n + \sum 2^{n-d_i} \right) + 3$ equivalence classes in $\mathcal{Q}(\mathscr{A})$.
\end{proof}
After another lemma, we will be ready to prove Theorem \ref{quotientsize2}.

\begin{lemma}
    For all $n \in \mathbb{N}^+$, $n \{2B_1 \mid \varnothing\} \in \mathcal{N}^-$ and for all nonzero $G \in \mathscr{A}$,  $n \{2B_1 \mid \varnothing\}+G \in \mathcal{L}^-$.
\end{lemma}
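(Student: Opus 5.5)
The plan is to prove a slightly stronger claim by induction and then read off both parts of the statement. Write $C = \{2B_1 \mid \varnothing\}$. The only Left option of $C$ is $2B_1$, and Right has no move in $C$.

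\textbf{Claim.} Let $k \geq 0$ and let $G \in \mathscr{A}$ be a sum of nimbers and blue mutant flowers. Then $kC + G \in \mathcal{L}^-$ whenever either
\begin{itemize}
\item[(a)] $G$ contains at least two blue mutant flowers, or
\item[(b)] $k \geq 1$ and $G$ is not the empty sum.
\end{itemize}

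I will prove the claim by induction on the birthday of $kC+G$. Replacing a copy of $C$ by $2B_1$ lowers the birthday, so the induction may be applied after such a move. When $k=0$, only case (a) can occur, and it is exactly Theorem \ref{atomic_weight}. So assume $k \geq 1$.

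\emph{Left moving first.} Left plays some copy of $C$ to $2B_1$. The result is $(k-1)C + (G+2B_1)$, and $G + 2B_1$ contains at least two blue mutant flowers, so it falls under (a). By induction it lies in $\mathcal{L}^-$, so Left wins with Right to move.

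\emph{Right moving first.} Right has no move in any copy of $C$, so he must move in $G$ to some option $G'$. We need Left to win $kC + G'$ moving first.
\begin{itemize}
\item If $G'$ is not empty, then $kC+G'$ satisfies (b), and the induction hypothesis gives that Left wins it going first.
\item If $G' = 0$, Left moves a copy of $C$ to $2B_1$. The position becomes $(k-1)C + 2B_1$, which is in $\mathcal{L}^-$ by induction (case (a)). So Left again wins.
\end{itemize}
This step, where Right's move empties $G$, is the one delicate point. It is handled by the fact that Left can always cash in a $C$ to create two blue mutant flowers.

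Both parts of the lemma now follow.
\begin{itemize}
\item For $n\{2B_1\mid\varnothing\} = nC$ with $n \geq 1$: Right moving first has no move, so he wins under mis\`ere play. Left moving first must move to $(n-1)C + 2B_1$, which is in $\mathcal{L}^-$ by the Claim, so Left wins. Hence $nC \in \mathcal{N}^-$.
\item For $nC + G$ with $G \in \mathscr{A}$ nonzero, meaning not the empty sum: this is case (b) with $k=n \geq 1$, so $nC + G \in \mathcal{L}^-$.
\end{itemize}

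I expect the main obstacle to be bookkeeping rather than a new idea. One must check that the induction is well founded when $C$ is replaced by $2B_1$. One must also check that Theorem \ref{atomic_weight}, which is stated for a single type of blue flower, applies to sums mixing different $B_i$. If it does not, I would first re-run its proof verbatim: Left's move into any flower is to $0$, and Right's replies are to nimbers, so the same induction should go through for mixed sums of flowers.
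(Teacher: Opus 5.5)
Your proof is correct and uses the same idea as the paper's: Left always turns a copy of $\{2B_1\mid\varnothing\}$ into $2B_1$ and appeals to Theorem~\ref{atomic_weight} for sums with at least two flowers (the paper also applies this to mixed sums without comment), while Right, having no move in that component, is forced to move in $G$. The paper just organizes this as an induction on $n$ showing that Left wins moving first for every $G\in\mathscr{A}$, including $G=0$, which absorbs your separate $G'=0$ subcase.

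There are two small points. First, state explicitly, as the paper does, that Right has a move in every nonempty $G\in\mathscr{A}$, since otherwise he would win at once under mis\`ere play. Second, your fallback sketch for mixed flowers is inaccurate: in $B_i+B_j$, moving a flower to $0$ leaves a lone flower, which lies in $\mathcal{N}^-$ by Lemma~\ref{eval}. The base case instead needs Left to turn a suitably chosen flower into $*c$, where $c$ is the $\mex$ of the other flower's Right options.
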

\begin{proof}
First, we will show by induction on $n$ that $n \{2B_1 \mid \varnothing\}+G \in \mathcal{L}^-\cup\mathcal{N}^-$, where $G$ is a possibly $0$ game. 
If $n = 1$, Left wins $n \{2B_1 \mid \varnothing\}+G $ going first by moving in $\{2B_1 \mid \varnothing \}$ to $2B_1$ to get to $2B_1 + G \in \mathcal{L}^-$ by Theorem \ref{eval}. By induction, assume that $(n-1)\{2B_1\mid \varnothing \} +G \in \mathcal{L}^- \cup \mathcal{N}^-$ for all $G \in \mathscr{A}$. 
We claim that Left wins $(n-1)\{2B_1\mid \varnothing \} +G $ by moving to $2B_1$ in a $\{2B_1\mid \varnothing\}$. Then Right is to move in $(n-1)\{2B_1 \mid \varnothing\} +G+2B_1$. Since he has no options in $(n-1)\{2B_1 \mid \varnothing\}$ he must move in $G+2B_1$ to some right option $(G+2B_1)^R$. But by induction, Left wins $(n-1)\{2B_1 \mid \varnothing\} +(G+2B_1)^R$ going first. Thus $n\{2B_1 \mid \varnothing\} + G \in \mathcal{L}^- \cup \mathcal{N}^-$.

Now we can show that $n \{2B_1 \mid \varnothing\}+G \in \mathcal{L}^-$ for nonzero $G$. If $G \in \mathscr{A}$ is nonzero, then $G$ has a Right option. So Right must move to $n\{2B_1 \mid \varnothing\} + G^R$, which Left wins by the previous paragraph. Thus Right loses $n\{2B_1 \mid \varnothing\} + G$ going first, and $n\{2B_1 \mid \varnothing\} + G$ for nonzero $G$.

Since $ n \{2B_1 \mid \varnothing\}$ has no Right options, Right wins going first, so $ n \{2B_1 \mid \varnothing\} \in \mathcal{N}^-$.
\end{proof}
\begin{proof}[Proof of Theorem \ref{quotientsize2}]
The previous Lemma shows that adding $n\{2B_1 \mid \varnothing\}$ won't help distinguish two elements of $\cl ( \mathscr{A} \cup \{2B_1 | \varnothing\})$. So if $G \equiv_{\mathscr{A}} H$, we also have $G\equiv_ { \mathscr{A} \cup \{2B_1 \mid \varnothing\}} H$. So we just need to consider the equivalence classes of $n\{2B_1 \mid \varnothing\}+G$. The games $n\{2B_1 \mid \varnothing\}$ form an equivalence class of their own, but the games $n\{2B_1 \mid \varnothing\}+G$ for nonzero $G$ are equivalent to $2B_1$, since the outcome class of $n\{2B_1 \mid \varnothing\} +G$ is not changed by adding an element. Thus there is only one more equivalence class, $\{n\{2B_1\mid \varnothing \}\}$, meaning $\left|\mathcal{Q} ( \mathscr{A} \cup \{2B_1 | \varnothing\})\right|  = \left |\mathcal{Q} (\mathscr{A})\right |+1$.
\end{proof}
We can now almost finish proving Theorem \ref{main}.
\begin{thm}
    Let $N\neq  1,2,3,4,5,6,7,11$. Then there exists $\mathscr{A}$ such that $|\mathcal{Q}(\mathscr{A})| = N$.
\end{thm}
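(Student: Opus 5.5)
The plan is to realize every $N\ge 12$, together with $N=8,9,10$, using Theorems \ref{quotientsize} and \ref{quotientsize2}. For fixed $n$ we will choose a family of distinct subspaces $H_1,\dots,H_m$ of $N_{<2^n}$, each containing $1$, so that the sum $s=\sum_i 2^{n-d_i}$ takes every value in $\{1,\dots,2^n-1\}$. Theorem \ref{quotientsize} then gives quotients of size $2^n+3+s$. Applying Theorem \ref{quotientsize2} (available once $m\ge 1$) adds one more, giving $2^n+4+s$. Together these cover the whole interval $[2^n+4,\,2^{n+1}+3]$.

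The counting step uses two kinds of subspaces containing $1$.
\begin{itemize}
\item The whole space $N_{<2^n}$ has $d=n$ and contributes $2^{0}=1$.
\item The hyperplanes containing $1$ have $d=n-1$ and contribute $2$ each. They correspond to hyperplanes of the quotient $N_{<2^n}/\langle 1\rangle\cong\mathbb{F}_2^{n-1}$, so there are exactly $2^{n-1}-1$ of them.
\end{itemize}
Choose $k$ of the hyperplanes, with $0\le k\le 2^{n-1}-1$, and optionally include the whole space. This yields $s=2k$ or $s=2k+1$, so every $s$ with $0\le s\le 2^n-1$ is attained. We require $m\ge 1$, which excludes only $s=0$.

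All these subspaces contain $0$ and $1$, so the right options of each $B_i$ include $0$ and $*$. Hence $\mex(H_i)>1$ and the hypotheses behind Lemma \ref{eval} hold; I would double-check this at this point. The resulting sizes are therefore
\[
2^n+3+s\quad (1\le s\le 2^n-1)\qquad\text{and}\qquad 2^n+4+s\quad (1\le s\le 2^n-1),
\]
which together fill $[2^n+4,\,2^{n+1}+3]$.

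Taking the union over $n\ge 3$, consecutive intervals $[2^n+4,2^{n+1}+3]$ and $[2^{n+1}+4,2^{n+2}+3]$ abut, so together they cover every $N\ge 12$. For $n=2$ the only subspaces of $N_{<4}$ containing $1$ are $\{0,1\}$ (contributing $2$) and $N_{<4}$ itself (contributing $1$). This gives $s\in\{1,2,3\}$ and hence sizes $8,9,10$ from Theorem \ref{quotientsize}.

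The step I expect to be the main obstacle is the bookkeeping at the boundary. The $+1$ from Theorem \ref{quotientsize2} applied at $n=2$, $s=3$ would formally give $11$, yet the statement excludes $11$. So I would first re-examine exactly which constructions are claimed valid for small $n$, and verify that the intervals for $n\ge 3$ really cover $12$ onward without relying on the $n=2$ endpoint. The smallest value needed is $12=2^3+4$, obtained at $n=3$ with $s=1$ (take only the whole space), so the $n\ge3$ argument is self-contained. Finally, $8,9,10$ come from $n=2$ via Theorem \ref{quotientsize} alone, which is enough for the statement as given.
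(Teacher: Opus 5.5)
\textbf{The gap.} You use the whole space $N_{<2^n}$ as one of the $H_i$. That is your only source of odd $s$, and the count $2^n+3+s$ does not hold for it.

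The flowers that make Theorem \ref{quotientsize} work are those of Lemma \ref{eval}, with Left options $0,*,\dots,*\mex(H_i)$. This is needed because Theorem \ref{atomic_weight} requires the Left mex to exceed the Right mex. (The options $\{0,*\}$ printed in Theorem \ref{quotientsize} violate this: already for $B=\{0,*\mid 0,*\}$, Left loses $B+*2$ moving first.) The Left option $*\mex(H_i)$ is what puts every multi-flower sum in $\mathcal{L}^-$: in $2B_i$, Left moves to $B_i+*\mex(H_i)$, and Right has no matching heap.

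The mex of a subspace is a power of $2$, so a proper $H_i$ has $\mex(H_i)\le 2^{n-1}$, and the nimbers of $\mathscr{A}$ stay within $\mathscr{T}_n$. But $\mex(N_{<2^n})=2^n$, so $*2^n\in\mathscr{A}$. The nimbers alone then contribute $2^{n+1}+2\ge 2^n+3+s$ classes, before any flower class is counted.

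Capping Left's options below $*2^n$ does not help either. In $2B$, Right answers every Left move $B+*c$ by moving $B$ to $*c$ (to $*(c\oplus 1)$ if $c\le 1$). This leaves a $\mathcal{P}^-$ position, so $2B\notin\mathcal{L}^-$ and class iii breaks down.

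This is why the paper's proof assumes $a_m>0$, i.e.\ $d_i<n$, and obtains the lowest binary digit from Theorem \ref{quotientsize2} instead. It is also precisely the hypothesis you said you would double-check.

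\textbf{What survives the repair.} Once $N_{<2^n}$ is excluded, every $s$ is even and at least $2$. The reachable sizes are $2^n+3+s$ and, via Theorem \ref{quotientsize2}, $2^n+4+s$.

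Hyperplanes alone give $[2^n+5,\,2^{n+1}+2]$. The values $2^{n+1}+3$ and $2^{n+1}+4$ between consecutive blocks need $s=2^n$, hence lower-dimensional subspaces. For instance, the paper's line $\{0,1\}$ plus two planes works for $n\ge 3$. This yields $9$, $10$ and all $N\ge 13$, which is what the paper's argument actually establishes.

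Your unease about $11$ was the symptom: $8$, $11$, $12$ are exactly the values $\ge 8$ that become unreachable. The paper's proof does not cover $8$ and $12$ either. For $N=2^{a_0}+4$ with $a_0\in\{2,3\}$, its $a_m=0$ reduction calls for sizes $7$ and $11$ from Theorem \ref{quotientsize}. With proper subspaces that theorem only produces odd sizes equal to $9$ or at least $13$. So those two values still need a separate construction.
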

\begin{proof}

Recall that $N_{<2^n}$ is defined as the set of integers strictly less than $2^n$ viewed as a vector space over $\mathbb{F}_2$, where vector addition is $\oplus$. Notice that for $d\geq 1$, there exist $\binom{n-1}{d-1}$ subspaces of dimension $d$ that contain $1$. Now consider the binary representation of $N -3$, say \begin{equation}
    N-3 = 2^{a_0} + 2^{a_1} + \dots + 2^{a_{m}},
\end{equation}
where $a_0> a_0 > \dots > a_{m}$. Assume that $a_0 \geq 2$ and $m >0$. Set $n = a_1$ and $d_i = n - a_{i}$ for $i>0$. Since all the $a_i$'s are unique and decreasing, the $d_i$'s are unique and increasing. 

Assume that $a_m > 0$, so $d_i < n$ for all $i$.
Since $d_i \geq 1$ unconditionally and the $d_i$'s are unique, there exist distinct subspaces $H_1,H_2,\dots H_m$ of $N_{<2^n}$ such that $1 \in H_i$ and $H_i$ has dimension $d_i$ for all $i$. Let $H_i = \{x_{1,i},\dots,x_{2^{d_i},i}\}$ and define $B_i = \{0,*|0,*x_{1,i},\dots,*x_{2^{d_i},i}\}$ for $1 \leq i \leq m$. Let $\mathscr{A} = \cl( *2^{n-1},B_1,\dots,B_m ) $. Then by Theorem \ref{quotientsize}, \begin{equation}
    \left |\mathcal{Q} (\mathscr{A})\right | = \left( 2^n +\sum_{i=1}^m 2^{n-d_i} \right) +3 = \left( 2^n +\sum_{i=1}^m 2^{a_i} \right) +3 = N-3+3=N.
\end{equation}

If $a_m = 0$, so that $d_m = n$, the binary representation of $N-4$ is \begin{equation}
   N-4 = 2^{a_0} + 2^{a_1} + \dots + 2^{a_{m-1}}.
\end{equation}
We can proceed as before to get a mis\`ere quotient with cardinality $N-1$. But if we applied Theorem \ref{quotientsize2}, then we get a mis\`ere quotient with cardinality $N$ instead.

It remains to consider the case where $ m  =0$, so that $N = 2^{a_0} + 3$. Say that $a_0 > 3$, and let $n = a_0 -1$. Then $N_{<2^n}$ has at a subspace $H_1$ of dimension $1$ that contains $1$, and at least two subspaces $H_2,H_3$ of dimension $2$ that contain $1$.  Let  $B_i = \{0,*|0,*x_{1,i},\dots,*x_{2^{d_i},i}\}$ for $1 \leq i \leq 3$, and $\mathscr{A} = \cl( *2^{n-1},B_1,B_2,B_3 ) $. By Theorem \ref{quotientsize}, we see that \begin{equation}
    |\mathcal{Q}(\mathscr{A})| = \left(2^{n} + 2^{n-1} + 2^{n-1} \right)+3 = 2^{n+1}+3 = 2^{a_0}+3 =N.
\end{equation}

So unless $N \leq 2^2+2 = 6$, so that $a_0 < 2$, or $N = 2^{a_0} + 3$ for $a_0 \leq 3$, we have found $\mathscr{A}$ such that $|\mathcal{Q}(\mathscr{A})| = N$. These exceptions are exactly $N = 1,2,3,4,5,6,7,11$.
\end{proof}
Now we deal with the exceptions.
\begin{lemma}
    There exist $\mathscr{A}$ such that $|\mathcal{Q}(\mathscr{A})| = 1,2,4,5,6,7,11$.
\end{lemma}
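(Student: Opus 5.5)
We need explicit constructions for $1,2,4,5,6,7,11$.

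The small cardinalities come from impartial quotients. For $1$, take $\mathscr{A}=\{0\}$: there is only one game, so the quotient is trivial. For $2$, take $\mathscr{A}=\cl(*)$, the sums of copies of $*$. The parity of the number of stars determines the misère outcome, and $0$ and $*$ are distinguished by $X=0$. This gives the group $\{1,a\}$ with $a^2=1$. For $6$, take $\mathscr{T}_2=\cl(*2)$; Theorem \ref{tame} with $n=2$ gives $2^2+2=6$.

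The odd and remaining cases use the machinery of Section \ref{construction} with small parameters. Theorem \ref{quotientsize} with $n=1$ would need a subspace $H\subseteq N_{<2}$ containing $1$, namely $H=N_{<2}$ with $d=1$. Then $2^n+2^{n-d}+3=2+1+3=6$, which adds nothing new. Instead I would build quotients directly from $\cl(B)$, with no nimber generator beyond those appearing as options of $B$.

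\textbf{Case 4.} Take $B=\{0,*\mid 0,*\}$, so $\mex$ on both sides is $2$. Theorem \ref{atomic_weight} fails here, so I would compute outcomes by hand in $\cl(B,*)$. The classes should be $0$, $*$, $B$ (or $B+*$), and all sums with at least two flowers, which should be Left wins. If $B+*$ is not equivalent to $B$, the count becomes $5$; I would check both possibilities.

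\textbf{Case 5.} Apply the idea of Theorem \ref{quotientsize2} to the construction giving $4$: adjoin $\{2B\mid\varnothing\}$. By the argument of that theorem this adds exactly one class, namely the positive multiples of $\{2B\mid\varnothing\}$.

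\textbf{Case 7.} Adjoin $\{2B_1\mid\varnothing\}$ to $\mathscr{T}_2$-based data giving $6$. If that fails, use Theorem \ref{quotientsize} with $n=1$, which gives $6$, and then Theorem \ref{quotientsize2}, which gives $7$. This requires checking that the proofs of Theorems \ref{quotientsize} and \ref{quotientsize2} survive $n=1$. The hypothesis $\mex>1$ of Lemma \ref{eval} holds since $H\ni 0,1$, and $\mathscr{T}_1=\cl(*)$ has $2^1+2=4$ classes only formally; I would recount those directly.

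\textbf{Case 11.} Take $n=2$ and a single subspace $H_1=N_{<4}$ of dimension $2$. This gives $4+1+3=8$, and adjoining $\{2B_1\mid\varnothing\}$ gives $9$. That is not enough, so I would instead take $n=2$ with both $H_1=\{0,1\}$ ($d=1$) and $H_2=N_{<4}$ ($d=2$). Then $4+2+1+3=10$, and Theorem \ref{quotientsize2} gives $11$. The hypotheses are satisfied: the subspaces are distinct and both contain $1$.

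The main obstacle is the degenerate small cases, $4$, $5$ and $7$. There $n=1$ or the flower has $\mex(X)$ small, so Theorem \ref{atomic_weight} and Lemma \ref{eval} may not apply verbatim. These need a direct outcome computation, distinguishing classes by explicit test games and proving identifications through the absorbing property of multi-flower sums.
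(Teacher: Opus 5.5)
Your cases $1$, $2$ and $6$ are correct and match the paper's table ($\{0\}$, $\cl(*)$, $\cl(*2)$). Cases $4$, $5$, $7$ and $11$, however, are not established.

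\textbf{Cases 4 and 5.} Your $B=\{0,*\mid 0,*\}$ has the same Left and Right options, so it is just the nim-heap $*2$. Hence $\cl(B,*)=\mathscr{T}_2$, which has $6$ classes by Theorem \ref{tame}, and there is no flower at all. For instance, the ``two-flower'' sum $*2+*2$ is impartial and lies in $\mathcal{P}^-$, not $\mathcal{L}^-$. Case 5 inherits this failure. The lemma behind Theorem \ref{quotientsize2} also fails here: $\{2B\mid\varnothing\}+*$ is in $\mathcal{P}^-$, not $\mathcal{L}^-$. Your guessed class structure is right for the genuinely partizan flower $B=\{0,*\mid 0\}$, which is the paper's entry for $4$. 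But a direct computation there gives $B\in\mathcal{L}^-$, $B+*\in\mathcal{N}^-$, and every $aB+b*$ with $a\ge 2$ in $\mathcal{L}^-$ and absorbing. The classes are therefore $0$, $*$, $B$, $B+*$ and one absorbing class. That is five, not four, so it settles $5$, and $4$ still needs an example.

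\textbf{Cases 7 and 11.} These apply Theorems \ref{quotientsize} and \ref{quotientsize2} with a full subspace: $H=N_{<2}$ with $n=1$, respectively $H_2=N_{<4}$ with $n=2$. The machinery cannot handle this. Theorem \ref{atomic_weight} needs the $\mex$ of Left's options to exceed that of Right's. If Right's options are $*h$ for all $h\in N_{<2^n}$, the flower needs a Left option $*2^n\notin\mathscr{T}_n$. The nim part then becomes $\mathscr{T}_{n+1}$, and by Theorem \ref{quotientsize} with $n+1$ in place of $n$ your data would give $9,10$ and $17,18$. With the literal $B_i=\{0,*\mid 0,\dots\}$, $H=\{0,1\}$ again just gives $B=*2$. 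Your first suggestion for $7$ has no flower $B_1$ to work with. This is exactly why the paper's general argument assumes $d_i<n$, routes $a_m=0$ through Theorem \ref{quotientsize2}, and lists $7$ and $11$ as exceptions.

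The paper does not derive the small cases from the flower theorems at all. It exhibits explicit closed sets and appeals to a partizan version of Weimerskirch's algorithm. What your proposal lacks is precisely that: explicit sets for $4,5,7,11$ together with actual outcome computations. Verify any candidate yourself, including the paper's. For example, in $\cl(1,*)$, its choice for $7$, an easy induction shows that $a\cdot 1+b*$ is in $\mathcal{R}^-$ when $b<a$, and otherwise in $\mathcal{N}^-$ or $\mathcal{P}^-$ according as $b-a$ is even or odd. So the games $k\cdot 1$ are pairwise inequivalent, and that quotient is infinite.
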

\begin{proof}

\begin{table}
\label{sets}
    \centering
    \begin{tabular}{|c|c|}
    \hline
         $n$& $\mathscr{A}$\\
         \hline
         $1$ & $\varnothing$\\
         \hline
         $2$&$*$\\
         \hline
          $4$& $\{0,*\mid 0\} $\\
         \hline
         $5$ &$  \{ 2 \{0,*\mid 0 \} \mid \varnothing \} $\\
         \hline
         $6$&$*2$ \\
         \hline
       $7$& $1,*$  \\
        \hline
        $11$& $\{*\mid0\}, \{0\mid 0,*\}$ \\
        \hline
    \end{tabular}
    \caption{Sets $\mathscr{A}$ such that $\mathcal{Q}(\mathscr{A}) = n$.}
    \label{tab:placeholder}
\end{table}
In each row of Table \ref{sets}, we give an example of a set of games $\mathscr{A}$ such that $\mathcal{Q}(\mathscr{A}) = n$ for $n = 1,2,4,5,6,7,11$.
We will not show the computations here, but the mis\`ere quotients of Table \ref{sets} can all be computed by using a partisan version of the algorithm given by Weimerskirch in \cite{weimerskirch} for computing mis\`ere quotients of impartial games. 
\end{proof}
\section{There is No Mis\`ere Quotient of Order 3}
\label{impossibility}
In this section, we will prove Theorem \ref{main2}.

There are only three games of birthday two: $1 = \{0\mid \}, * = \{ 0\mid 0\}, \overline{1} = \{ \mid 0\}$. 
Using this, we can restrict the $\mathscr{A}$ we need to check among. 
\begin{lemma}
  \label{root}
    Let $\mathscr{A}$ be a closed set. If $|\mathcal{Q}(\mathscr{A}) | = 3$, then $\mathscr{A}$ contains exactly one of $1,*,\overline{1}$.
\end{lemma}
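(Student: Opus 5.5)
The plan is to prove both halves of ``exactly one'': first that $\mathscr{A}$ contains at least one of $1,*,\overline{1}$, then that it cannot contain two of them. For the first half, note that $|\mathcal{Q}(\mathscr{A})|=3$ forces $\mathscr{A}$ to contain a nonzero game $G$. Repeatedly pass to an option of minimal birthday among the options of the current game. Since $\mathscr{A}$ is closed under options, this produces a subposition of $G$ in $\mathscr{A}$ whose options are all $0$. Such a game is one of $\{0\mid\}=1$, $\{0\mid 0\}=*$, $\{\mid 0\}=\overline{1}$.

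For the second half, suppose $\mathscr{A}$ contains two of the three games; the plan is to exhibit four pairwise inequivalent elements. First record the mis\`ere outcomes: $0\in\mathcal{N}^-$ (the player to move has no move and so wins), $1\in\mathcal{R}^-$, $\overline{1}\in\mathcal{L}^-$, and $*\in\mathcal{P}^-$. Any two of $0,1,*,\overline{1}$ lying in $\mathscr{A}$, together with $0$, therefore already give three distinct classes, distinguished by $X=0$. The job is to find a fourth class in each case. Conjugation (swapping Left and Right) maps $1\leftrightarrow\overline{1}$, fixes $*$, and preserves $|\mathcal{Q}|$. Hence only two cases need checking: $\{1,\overline{1}\}\subseteq\mathscr{A}$ and $\{1,*\}\subseteq\mathscr{A}$.

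In the case $1,\overline{1}\in\mathscr{A}$, I would take the four elements $0,1,\overline{1},2\cdot 1$. A direct check gives $2\cdot 1\in\mathcal{R}^-$, because Right has no move and Left's moves only go to $1$. Hence $X=0$ separates $2\cdot1$ from $0$ and from $\overline{1}$. To separate $2\cdot1$ from $1$, use $X=\overline{1}$. First, $1+\overline{1}\in\mathcal{N}^-$: each player moves in their own component to leave the opponent facing a position in which the opponent's only move hands back a position with no moves for the mover. Second, $2\cdot1+\overline{1}\in\mathcal{R}^-$: Right going first moves to $2\cdot 1$, and Left going first leaves either $1+\overline{1}$ or $2\cdot1$, both of which Right wins going first.

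In the case $1,*\in\mathscr{A}$, I would take $0,1,*$ and search for a fourth class among small sums. The first candidates are $1+*$, $2\cdot 1$, and $*+*$. Each of these needs outcome calculations under several small distinguishers $X\in\{0,1,*,1+*\}$ to show it differs from $0$, $1$, and $*$. For example, $1+*\in\mathcal{N}^-$ has the same outcome as $0$, so it should be separated from $0$ by adding $X=*$ or $X=1$. Similarly $2\cdot1$ has the same outcome as $1$, and I would try $X=*$ to separate them.

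The main obstacle is this $\{1,*\}$ case. There is no second ``colour'' of integer to pair against, so the separating test game is less obvious, and I expect to need a small case table of outcomes. If one candidate fails to separate, I would fall back on whichever of $1+*$, $2\cdot1$, $2\cdot *$ works. It is enough that at least one of them lies outside the classes of $0$, $1$, and $*$.
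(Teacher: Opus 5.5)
Your route differs from the paper's, and one of its two cases is missing. The paper just cites Allen's computed quotients of $\cl(1,\overline{1})$ and $\cl(1,*)$. You instead try to exhibit four pairwise $\equiv_{\mathscr{A}}$-inequivalent elements directly, which would be self-contained. Your $\{1,\overline{1}\}$ case is correct. (Left's alternative move to $2\cdot 1$ does not exist, since Left has no move in $\overline{1}$, but this is harmless.)

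\textbf{The gap: the $\{1,*\}$ case.} You never carry it out, and the one concrete step you sketch would fail. Adding $X=*$ or $X=1$ does not separate $1+*$ from $0$: $o^-(*)=o^-(1+*+*)=\mathcal{P}^-$, and $o^-(1)=o^-(1+1+*)=\mathcal{R}^-$.

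More is true. An easy induction shows that $o^-(a\cdot 1+b\cdot *)$ depends only on $k=a-b$:
\begin{itemize}
\item it is $\mathcal{R}^-$ for $k>0$;
\item it is $\mathcal{N}^-$ for even $k\le 0$;
\item it is $\mathcal{P}^-$ for odd $k\le 0$.
\end{itemize}
Hence $1+*\equiv 0$ modulo $\cl(1,*)$. Nothing you are guaranteed to have in $\mathscr{A}$ can separate them, so a ``search among small sums'' that starts from $1+*$ cannot succeed. (The same formula shows the quotient of $\cl(1,*)$ is infinite cyclic, with classes indexed by $a-b$. So the value $7$ quoted in the paper's proof appears to be wrong, though the lemma survives, since either way there are more than three classes.)

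\textbf{The repair} is your own fallback: use $0,1,*,2\cdot 1$.
\begin{itemize}
\item With $X=0$ their outcomes are $\mathcal{N}^-,\mathcal{R}^-,\mathcal{P}^-,\mathcal{R}^-$.
\item $X=*$ separates $1$ from $2\cdot 1$. First, $1+*\in\mathcal{N}^-$: Left wins by moving to $*$, and Right wins by moving to $1$. Second, $2\cdot 1+*\in\mathcal{R}^-$: Right wins by moving to $2\cdot 1$, while Left can only reach $1+*$ or $2\cdot 1$, both won by Right moving first.
\end{itemize}
Conjugation then covers $\{\overline{1},*\}$.

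\textbf{A smaller slip in the first half.} Repeatedly moving to an option of minimal birthday can jump straight to $0$; for example, with $G=\{0\mid *\}$ it ends at $0$ without ever producing a game whose options are all $0$. Instead, choose a nonzero subposition of $G$ of minimal birthday. All of its options must then be $0$, so it is one of $1$, $*$, $\overline{1}$.
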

\begin{proof}
    Since $\mathscr{A}$ is closed, it must contain at least one game of birthday $2$. It was calculated by Allen in \cite{allenthesis} that $|\mathcal{Q}(1,\overline{1})| = \infty$ and that $|\mathcal{Q}(1,*) |=|\mathcal{Q}(\overline{1},*) |= 7$. So if $\mathscr{A}$ contains more than one of $\{1,*,\overline{1}\}$, there are more than $3$ equivalence classes. Thus $\mathscr{A}$ must contain exactly one of $\{1,*,\overline{1}\}$.
\end{proof}

So we can break the proof of Theorem \ref{main2} in to two cases, up to symmetry: one where $* \in \mathscr{A}$, and one where $1 \in \mathscr{A}$. We will prove these separately.

\begin{lemma}
    \label{no star}
    Let $\mathscr{A}$ be closed, and $* \in \mathscr{A}$. Then $|\mathcal{Q}(\mathscr{A})| \neq 3$.
\end{lemma}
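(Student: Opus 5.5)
The plan is to first pin down the algebraic structure that a three-element quotient containing the classes of $0$ and $*$ must have, and then to contradict it by examining a game of minimal birthday in the third class.

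\textbf{Step 1: algebra.} Write $1$ for the class of $0$ and $a$ for the class of $*$. These classes are distinct, since $0 \in \mathcal{N}^-$ and $* \in \mathcal{P}^-$. Moreover $* + * \equiv 0$ as impartial games already, so $a^2 = 1$. Suppose $|\mathcal{Q}(\mathscr{A})| = 3$, and let $x$ be the remaining element. Then $x$ is not a unit: otherwise the group of units would have order $3$ while containing the element $a$ of order $2$. Since $x^2 \in \{1,a\}$ would make $x$ a unit, we get $x^2 = x$. For $xa$, the value $1$ is excluded because $x$ is not a unit, and $xa = a$ would give $x = xa\cdot a = a^2 = 1$, which is absurd. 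Hence $xa = x$, so $x$ is an absorbing (zero) element of the monoid.

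\textbf{Step 2: consequences of absorption.} Since $x$ is absorbing, every $G$ in class $x$ satisfies $o^-(G+X) = o^-(G)$ for all $X \in \mathscr{A}$. Call this common outcome $o_x$.

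Now fix $H$ of minimal birthday among games in class $x$. Every option of $H$ lies in $\mathscr{A}$ by closure and has smaller birthday, so by minimality each option lies in class $1$ or class $a$. Its outcome is therefore $\mathcal{N}^-$ or $\mathcal{P}^-$ respectively. The same holds after adding $*$: an option in class $1$ becomes class $a$, and vice versa.

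\textbf{Step 3: case analysis on $o_x$.} The four cases are $\mathcal{L}^-,\mathcal{R}^-,\mathcal{N}^-,\mathcal{P}^-$. Up to swapping Left and Right, I would treat them by comparing $H$ with $H+*$, which must have the same outcome $o_x$.

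Consider the case where Left wins $H$ moving first. Left's winning move must go to some $H^L$ in class $a$, i.e.\ to a $\mathcal{P}^-$ position, since a class-$1$ position is in $\mathcal{N}^-$ and Right would then win. Then in $H+*$, Left moving to $H^L + *$ lands in class $1$, which is in $\mathcal{N}^-$, so that move fails. Left's only other candidate in $H+*$ is the move to $H$, which leaves Right to move in class $x$. I expect the argument to go as follows:
\begin{itemize}
\item If Right also wins first in class $x$, then Left's move to $H$ fails too. Left then has no winning move in $H+*$, which contradicts $o^-(H+*) = o^-(H)$.
\item If Right loses first in class $x$, then $o_x = \mathcal{L}^-$. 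In that case Right has no winning move in $H$, so every $H^R$ is in class $1$. But then $H^R + *$ is in class $a$, i.e.\ $\mathcal{P}^-$, and Right wins $H+*$ going first by moving there. This contradicts $o_x = \mathcal{L}^-$.
\end{itemize}
The case where Left loses moving first in $H$ is symmetric: all $H^L$ are in class $1$, and then $H^L + *$ is a winning move for Left in $H+*$.

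Two edge cases need separate care. If $H$ has no Left options at all, then Left wins $H$ moving first under misère play. But Left also has the move $H+* \to H$, so the case split needs the first-player outcome in class $x$ as well. The same kind of check is needed whenever some option is empty.

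\textbf{Main obstacle.} The hard part is this bookkeeping. The moves $H+* \to H$, which stay inside the absorbing class, make the argument mildly circular. I would resolve it by fixing $o_x$ first, which fixes who wins moving first from class $x$, and then checking consistency of all four outcomes against the parity-flip effect of $*$ on options in classes $1$ and $a$. I expect every case to yield $o^-(H) \neq o^-(H+*)$, contradicting $xa = x$.
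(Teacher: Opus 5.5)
Your reduction to an absorbing third class is a different route from the paper's direct case analysis, and it could give a cleaner proof. As written, though, Step 3 has a genuine gap, and Step 1 has an invalid justification.

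\textbf{The gap: Step 3, case $o_x=\mathcal{N}^-$ (your first bullet).} You fix one winning Left option $H^L$ in class $a$ and then assert that Left's only other candidate in $H+*$ is the move to $H$. This overlooks that $H$ may \emph{also} have a Left option $H^{L'}$ in class $1$. Then $H^{L'}+*$ lies in class $a$, a $\mathcal{P}^-$ position, so Left wins $H+*$ moving first. This configuration does occur: $H=\{0,*\mid 0,*\}$ has all options in classes $1$ and $a$, and $o^-(H)=o^-(H+*)=\mathcal{N}^-$. So no bookkeeping with $H+*$ alone can finish this case, and your expectation that every case yields $o^-(H)\neq o^-(H+*)$ is false.

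You need a second test game. $H+H$ works, and it is what the paper uses in its $\mathcal{N}^-$ case. Since $x$ is absorbing, every option of $H+H$ has the form $H'+H$ with $H'$ in class $1$ or $a$, so it lies in $x$ and is in $\mathcal{N}^-$. Hence any player who has a move in $H+H$ loses moving first. But $x^2=x$ gives $o^-(H+H)=o^-(H)=\mathcal{N}^-$, so $H$ has no options at all. Then $H=0$, which lies in class $1$, a contradiction. Your second bullet and the symmetric case do correctly dispose of $\mathcal{L}^-$, $\mathcal{R}^-$ and $\mathcal{P}^-$. There you should note that $H$ has at least one Right (resp.\ Left) option, since otherwise that player wins moving first.

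\textbf{Step 1: why $*+*\equiv_{\mathscr{A}}0$.} The justification ``$*+*\equiv 0$ as impartial games already'' is not valid. The relation $\equiv_{\mathscr{A}}$ is tested against all of $\mathscr{A}$, which may contain partizan games. For instance, if $\{0\mid\}\in\mathscr{A}$, then $\{0\mid\}\in\mathcal{R}^-$ but $*+*+\{0\mid\}\in\mathcal{P}^-$, so $*+*\not\equiv_{\mathscr{A}}0$.

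Under your hypothesis $|\mathcal{Q}(\mathscr{A})|=3$ this is repairable. First, $*+*\not\equiv_{\mathscr{A}} *$, because their outcomes are $\mathcal{N}^-$ and $\mathcal{P}^-$. Suppose $*+*$ represented the third class. Then every $X\in\mathscr{A}$ would be equivalent to $k*$ for some $k\in\{0,1,2\}$. Testing against $*+*\in\mathscr{A}$ gives $o^-(X+*+*)=o^-((k+2)*)=o^-(k*)=o^-(X)$, so $*+*\equiv_{\mathscr{A}}0$ after all, a contradiction. With $a^2=1$ established this way, and the $\mathcal{N}^-$ case closed via $H+H$, your argument goes through. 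The monoid step (no unit group of order $3$, absorbing $x$) then replaces the paper's case-by-case manipulations with $G+G+*$ and $G+G+*+*$.
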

\begin{proof}
   For the sake of contradiction, assume that $|\mathcal{Q}(\mathscr{A})| = 3$. Then aside from the two equivalence classes containing $0$ and $*$, there must be a third equivalence class. Let the game with smallest birthday in this equivalence class be $G$. By assumption, $G$ only has moves to games equivalent to $0$ or $*$ in $\mathscr{A}$. We will refer to the options of $G$ up to equivalence, since that doesn't change the outcome of any games. For example, we will say that Right has a move to $*$ in $G$, instead of there exists a $G^R$ such that $G^R \equiv_{\mathscr{A}} *$.
   Recall that $2n* \in \mathcal{N}^-$ and $(2n+1)*\in \mathcal{P}^-$, so $\mathcal{Q}(*)$ has two equivalence classes, $\{ 2n* : n>0\}$ and $\{(2n+1 )*: n>0\}$.
Say that $G \in \mathcal{P}^-$. Then $G+* \in \mathcal{N}^-$, since both players can win by moving to $G$, and both players can only have moves to $0 \in \mathcal{N}^-$. We also calculate that $G+G \in \mathcal{N}^-$, since both players can win by moving to $G$. Since $G$ acts just like $*$ in $\mathscr{A}$, we have that $G \equiv_{\mathscr{A}} *$.

Say that $G \in \mathcal{N}^-$. Then both sides have a move to a position equivalent to $*$. There are only two outcome classes in $\mathscr{A}$, so we need $G+* \in \mathcal{P}^- \cup\mathcal{N}^-$. 
If both sides have a move to $0$, then $G+* \in \mathcal{N}$ and thus $G+G \in \mathcal{P}^-$. It follows that $G+G+* \in \mathcal{N}^-$. Thus $G$ distinguishes $G+*$ and $G$, so there are more than $3$ equivalence classes in $\mathscr{A}$, a contradiction.
If one side has a move to $0$ but the other does not then $G+* \in \mathcal{L}^- \cup \mathcal{R}^-$, depending on which side has the move to $0$, which would imply that there are more than $3$ equivalence classes.
If not, then neither side can have a move equivalent to $0$, and $G +* = \mathcal{P}^-$. So $G+* \equiv_{\mathscr{A}} *$. We also calculate that $G+G \in \mathcal{N}^-$, since both sides can only move to $G+*$. Then we have shown $G \equiv_{\mathscr{A}} 0$.

 Say that $G \in \mathcal{L}^-\cup\mathcal{R}^-$. By symmetry, we can assume without loss of generality that $G \in\mathcal{L}^-$. Then Right must only have a move to $0$ in $G$. We see that $G+*\in\mathcal{L}^- \cup\mathcal{N}^-$, since Left wins by moving to $G$. 
 
 Say that $G$ is a Left end. Then $G+* \in \mathcal{N}^-$, since Right wins by moving to $*$. So $G+* \equiv_{\mathscr{A}} 0$. We also calculate that $G+G \in \mathcal{L}^-$, since $G+G$ is a Left end and Right loses because he can only move to $G$. So $G+G \equiv_{\mathscr{A}} G$. We also must have $*+* \equiv 0_{\mathscr{A}}$, since $*+* \in \mathcal{N}^-$. Then we conclude that $G+G+*+* \equiv_{\mathscr{A}} 0$, but also that $G+G+*+* \equiv_{\mathscr{A}} G+*+* \equiv_{\mathscr{A}} G$. Thus $G \equiv_{\mathscr{A}} 0$, a contradiction.

 Now say that $G$ isn't a Left end. Then it must be the case that $*$ is a Left option. Recall that the only Right option is $0$. 
Notice that $G+* \in \mathcal{N}^-$, since Right wins going first by moving to $*$, and Left can win going first by moving to $G$. So $G+* \equiv_{\mathscr{A}} 0$. 
Notice that $G+G \in \mathcal{P}^-\cup\mathcal{L}^-$, since Rights only option is to move to $G$, which Left wins.
If Left does have a move to $0$ in $G$, then we have that $G+G \in \mathcal{L}^-$, since Left wins by moving in $G+G$ to $0$, and we reach a contradiction just as in the case where $G\in\mathcal{L}^-$ was a Left end.
If Left does not have a move to $0$ in $G$, then we have that $G+G \in \mathcal{P}^-$, so $G+G \equiv_{\mathscr{A}} *$. 
Then it follows from the equivalences we have proven that $G+G + * \equiv_{\mathscr{A}} G+* \equiv_{\mathscr{A}} 0$, but also $G+G+* \equiv_{\mathscr{A}} G$, a contradiction.
\end{proof}

\begin{lemma}
  \label{no one}
    Let $\mathscr{A}$ be closed, and $1 \in \mathscr{A}$. Then $|\mathcal{Q}(\mathscr{A})| \neq 3$.
\end{lemma}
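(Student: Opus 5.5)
The approach mirrors Lemma \ref{no star}. Suppose for contradiction that $|\mathcal{Q}(\mathscr{A})| = 3$. First I would compute $\mathcal{Q}(1)$.

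\emph{Step 1: the quotient of $1$.} Since $1 = \{0 \mid \}$, Right never has a move in $n\cdot 1$. So Right always wins going first. Left going first must move, and after her last move Right is left with no move, so Right wins. Hence $n\cdot 1 \in \mathcal{R}^-$ for all $n \ge 1$, while $0 \in \mathcal{N}^-$. This gives two classes, $[0]$ and $[1]$, with $1+1 \equiv_{\mathscr{A}} 1$.

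\emph{Step 2: the third class.} Let $G$ be a game of minimal birthday in the third class. As in Lemma \ref{no star}, every option of $G$ is equivalent to $0$ or to $1$, and we may replace options by these representatives. Thus the Left options of $G$ are a subset of $\{0,1\}$, and likewise the Right options, giving at most $16$ shapes. Symmetry is no longer available, since $1$ is not self-conjugate, so Left and Right must be treated separately. Two observations about what a move to each representative achieves will help:
\begin{itemize}
\item A move by Right to a position $\equiv 1$ always wins, because $1$ and every $1+X$ with $X$ Left-favourable leaves Left without a good reply. More precisely, the outcome of $1+X$ should be controlled by the outcome of $X$.
\item A move to $0$ hands the opponent a position in $\mathcal{N}^-$, so it wins for the mover exactly when moving last is bad.
\end{itemize}

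\emph{Step 3: case analysis.} For each shape I compute the outcomes of $G$, $G+1$ and $G+G$. The quotient is a $3$-element commutative monoid $\{0,1,G\}$ with $1$ idempotent, so $G+1$ and $G+G$ must each be equivalent to one of $0$, $1$, $G$. Moreover, the outcome of each of these sums must match the outcome of the class it is supposed to equal. In each case I would then reach one of three conclusions:
\begin{itemize}
\item $G$ lies in an outcome class other than $\mathcal{N}^-$ or $\mathcal{R}^-$, and adding a suitable test game ($0$, $1$ or $G$) produces a position whose outcome fits no existing class, as in the ``$\mathcal{L}^-\cup\mathcal{R}^-$ gives too many classes'' step of Lemma \ref{no star};
\item the relations force $G \equiv_{\mathscr{A}} 0$ or $G \equiv_{\mathscr{A}} 1$, by checking that $G$ and the candidate agree on the tests $0$, $1$, $G$, $G+1$, and the test set is closed up to equivalence;
\item the monoid relations collide, for instance $G+G+1$ computed in two ways, as in the final contradiction of Lemma \ref{no star}.
\end{itemize}

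\emph{Main obstacle.} I expect the main difficulty to be the cases where $G \in \mathcal{P}^-$ or $G \in \mathcal{L}^-$. There $G$ genuinely differs from both $0$ and $1$ on the empty test, so the contradiction must come from monoid consistency rather than outcome mismatch. For example, if $G\in\mathcal{L}^-$, I would first try to show $G+1 \equiv_{\mathscr{A}} 0$ or $1$. I would then derive $G = G+(1+1)$-type identities that force $G$ into an existing class, mimicking the Left-end argument in Lemma \ref{no star}. Finally, by Lemma \ref{root}, $*,\overline{1}\notin\mathscr{A}$, which rules out shapes whose options would produce those games.
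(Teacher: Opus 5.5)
Your framework is the paper's: take $G$ of minimal birthday in the third class, note that its options are $\equiv_{\mathscr{A}} 0$ or $1$, and test $G$, $G+1$, $G+G$ to force $G\equiv_{\mathscr{A}}0$, $G\equiv_{\mathscr{A}}1$, or a collision in the three-element monoid. But the proposal stops at the framework. No case is actually evaluated, so the step that produces the contradiction is missing.

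\textbf{The missing idea.} Split on $o^-(G)$ rather than on 16 shapes, because the outcome pins down the options. Every Left move of $G$ loses for Left. A Right move to something $\equiv_{\mathscr{A}}1$ wins, and a Right move to something $\equiv_{\mathscr{A}}0$ loses. This gives four cases.
\begin{enumerate}
\item If $G\in\mathcal{R}^-$, Left has options and Right has none or one $\equiv_{\mathscr{A}}1$. Then $G+1,\,G+G\in\mathcal{R}^-$, so $G\equiv_{\mathscr{A}}1$.
\item If $G\in\mathcal{N}^-$, $G$ is a Left end with a Right option $\equiv_{\mathscr{A}}1$. Then $G+1\in\mathcal{R}^-$ and $G+G\in\mathcal{N}^-$, so $G\equiv_{\mathscr{A}}0$.
\item If $G\in\mathcal{P}^-\cup\mathcal{L}^-$ (your ``main obstacle''), Right's options are nonempty and all $\equiv_{\mathscr{A}}0$. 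This is exactly what makes $G+1\in\mathcal{N}^-$: Left wins by moving $1\to 0$, and Right wins by moving $G$ to its option $\equiv_{\mathscr{A}}0$. Since $[0]$ is then the only $\mathcal{N}^-$ class, $G+1\equiv_{\mathscr{A}}0$. Hence $1\equiv_{\mathscr{A}}(G+1)+1\equiv_{\mathscr{A}}G+1\equiv_{\mathscr{A}}0$, which is the collision.
\end{enumerate}
The computation $o^-(G+1)=\mathcal{N}^-$ is what your plan gestures at (``show $G+1\equiv 0$ or $1$'') but never supplies.

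\textbf{Auxiliary claims that would fail.} Several of the claims you plan to lean on are wrong.
\begin{itemize}
\item Lemma~\ref{root} cannot exclude shapes. The options of $G$ are only \emph{equivalent} to $0$ or $1$, so a $G$ of shape $\{0\mid 0\}$ or $\{\mid 0\}$ need not be $*$ or $\overline{1}$ and can lie in $\mathscr{A}$. These shapes are precisely the hard $\mathcal{P}^-$ and $\mathcal{L}^-$ cases, which must be handled directly as above.
\item The heuristic that a Right move to a position $\equiv_{\mathscr{A}}1$ always wins is false inside sums. If $G\in\mathcal{P}^-$, then $1+G\in\mathcal{N}^-$, so Left to move there wins.
\item $1+1\equiv_{\mathscr{A}}1$ does not follow from $\mathcal{Q}(1)$. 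For instance, $\overline{1}$ separates $1$ from $1+1$: $1+\overline{1}\in\mathcal{N}^-$ but $1+1+\overline{1}\in\mathcal{R}^-$. So your ``$1$ is idempotent'' needs an argument. Since $1+1\in\mathcal{R}^-$, it is not in $[0]$. It is also not in $[G]$ once the case $G\in\mathcal{R}^-$ is disposed of, and that case uses only outcomes, not idempotence. Treat that case first; the paper relies on the same implicit justification.
\end{itemize}
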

\begin{proof}
For the sake of contradiction, assume that $|\mathcal{Q}(\mathscr{A})| = 3$. Then aside from the two equivalence classes containing $0$ and $1$, there must be a third equivalence class. Let the game with smallest birthday in this equivalence class be $G$. By assumption, $G$ only has moves to games equivalent to $0$ or $1$ in $\mathscr{A}$. We will just refer to the options of $G$ up to equivalence, since that doesn't change the outcome of any games. For example, we will say that Right has a move to $1$ in $G$, instead of there exists a $G^R$ such that $G^R \equiv_{\mathscr{A}} 1$.

 Recall that $n1 \in \mathcal{R}^-,0\in\mathcal{N}$, so the two equivalence classes in $\mathcal{Q}(1)$ are $\{n1 : n>0\}$  and $\{0\}$.

Say that $G \in \mathcal{R}$. We will show that this is impossible.
We claim that $G+1\in \mathcal{R}$. If Right has no options in $G$, he has no options in $G+1$/ either, and he wins. If he does have a move in $G$, he wins by moving to $G^R+1$, since $G^R \equiv_{\mathscr{A}} 0,1$ and both possibilities lead to $\mathcal{R}^-$ games. Similarly, Left must have a move in $G$ since $G\in\mathcal{R}^-$, and we have $G^L \equiv_{\mathscr{A}} 0,1$, so $G^L + 1 \in \mathcal{R}^-$.
Further, we have that $G+G \in \mathcal{R}^-$. If either side has an option in $G+G$, it is to either $G$ or $G+1$, both of which Right wins. And Left must have a move in $G+G$, which proves that $G+G \in \mathcal{R}^-$. The only three equivalence classes in $\mathcal{Q}(\mathscr{A})$ have representatives $0,1,G$. So we have proven that $G$ behaves exactly like $1$, in $\mathscr{A}$ or that $G\equiv_{\mathscr{A}} 1$, a contradiction.

Say that $G \in \mathcal{N}^-$. For Left to win going first, the position must be a left end, since $0$ and $1$ are the only possible options, and Right wins going first in them. Since $G \not \equiv 0$, Right must be able to move to $1$. Then we calculate that $G+1 \in \mathcal{R}^-$, since Left's only move is to $G \in \mathcal{N}^-$, and Right can win by moving to $1+1 \in \mathcal{R}^-$. We also have that $G+G \in \mathcal{N}^-$, since $G+G$ is a Left end and Right can win by moving to $G+1 \in \mathcal{R}^-$. So we conclude that $G \equiv_{\mathscr{A}} 0$, a contradiction.

Say that $G \in \mathcal{P}^-$. Then Right only has moves to $0$ , and $G$ cannot be a Left end.
We calculate that $G+1\in \mathcal{N}^-$, since Right wins by moving to $1 \in \mathcal{R}^-$, and Left wins moving to $G \in \mathcal{P}^-$.
Since the only equivalence class in $\mathcal{N}^-$ has representative $0$, $G+1 \in \mathcal{N}^-$ is equivalent to $G+1 \equiv_{\mathscr{A}} 0$. 
Now consider $G+G$. Right is able to win going first by moving to $G \in \mathcal{P}^-$.
If $0$ is a Left option of $G$, $G+G \in \mathcal{N}$, since Left can also win by going to $G$. As before, this implies that $G+G \equiv_{\mathscr{A}} 0$. Then $G+G+1 \equiv_{\mathscr{A}} 1$, but $G+1 \equiv_{\mathscr{A}} 0$ implies that $G+G+1 \equiv_{\mathscr{A}} G$ as well. We have reached $G \equiv_{\mathscr{A}} 1$, a contradiction.
If $0$ is not a left option of $G$, Left's only move in $G+G$ is to $G+1 \in \mathcal{N}^-$, so $G+G \in \mathcal{R}^-$ and $G + G\equiv_{\mathscr{A}} 1$. This implies $G+G+1 \equiv_{\mathscr{A}} 1$, but $G+1 \equiv_{\mathscr{A}} 0$ means that $G+G+1 \equiv_{\mathscr{A}} G$, a contradiction. 

Say that $G \in \mathcal{L}^-$. Then $G$ must be a left end. Right cannot win $G $ moving first, so he must only have a move to $0$. Then $G +1 \in \mathcal{N}^-$, since Left wins by moving to $G \in \mathcal{L}^-$, and Right by moving to $1 \in \mathcal{R}^-$. So $G+1 \equiv_{\mathscr{A}} 0$. So $G+1 + 1 +1\equiv_{\mathscr{A}} 1 +1 \equiv_{\mathscr{A}} 1$. We can also calculate that $G+G+1+1+1 \equiv_{\mathscr{A}} G$, but also that $G + (G+1+1+1) \equiv_{\mathscr{A}} G+(1) \equiv_{\mathscr{A}} 0$, a contradiction.
\end{proof}
Up to symmetry, combining Lemmas \ref{root},\ref{no star}, and \ref{no one} proves Theorem \ref{main2}.

\printbibliography
\end{document}